\documentclass{amsart}[10pt]
\usepackage{amsmath,a4wide}
\usepackage{amssymb} 
\usepackage{amsthm}
\usepackage{graphicx}
\usepackage[all]{xypic}
 \newtheorem{theorem}{Theorem}[section]
 \newtheorem{corollary}{Corollary}[section]
 \newtheorem{proposition}{Proposition}[section]
 \newtheorem{lemma}{Lemma}[section]
 \newtheorem{claim}{Claim}[section]
 {\theoremstyle{definition}
 \newtheorem{remark}{Remark}[theorem]}
  {\theoremstyle{definition}
 \newtheorem{definition}{Definition}[section]}
  {\theoremstyle{definition}
 }
 {\theoremstyle{definition}
 }

 \makeatletter
    
    \@addtoreset{equation}{section}
  \makeatother
\begin{document}  

 \newcommand{\Z}{ \mathbb{Z}}
 \newcommand{\Q}{ \mathbb{Q}}
 \newcommand{\R}{ \mathbb{R}}

 \newcommand{\GL}{ \textrm{GL}}
 \newcommand{\mR}{ \mathcal{R}}
 \newcommand{\Aut}{ \textrm{Aut}\,}
 \newcommand{\Inn}{ \textrm{Inn}\,}
 \newcommand{\red}{\textsf{red}\,}
 \newcommand{\Hom}{\textrm{Hom}}
 \newcommand{\otau}{\widetilde{\tau}}
 \newcommand{\mA}{ \mathbf{A}}
 \newcommand{\mB}{ \mathbf{B}}
 \newcommand{\mC}{ \mathbf{C}}
 \newcommand{\mD}{ \mathbf{D}}
 \newcommand{\wTheta}{\widehat{\Theta}\,}

\title[$n$-braid action on the free group of rank $n$]{Actions of the n-strand braid groups on the free group of rank n which are similar to the Artin representation}
\thanks{T.I. was partially supported by the Grant-in-Aid for Research Activity start-up, Grant Number 25887030.}
\author{Tetsuya Ito}
\address{Research Institute for Mathematical Sciences, Kyoto University
Kyoto University, Sakyo-ku, Kyoto, Japan}
\email{tetitoh@kurims.kyoto-u.ac.jp}
\subjclass[2010]{Primary~20F36, Secondary~57M25}
\urladdr{http://www.kurims.kyoto-u.ac.jp/~tetitoh/}
\keywords{Braid groups, Local $\Aut(F_{n})$ representation, group-valued invariants.}

\begin{abstract}
We classify an action of the $n$-strand braid group on the free group of rank $n$ which is similar to the Artin representation in the sense that the $i$-th generator $\sigma_{i}$ of $B_{n}$ acts so that it fixes all free generators $x_{j}$ except $j = i,i+1$. We determine all such representations and discuss knot invariants coming from such representations.
\end{abstract}
\maketitle

\section{Introduction}

Let $B_{n}$ be the braid group of $n$-strands, with standard generators $\sigma_{1},\ldots,\sigma_{n-1}$ and let $F_{n}$ be the free group of rank $n$ generated by $x_{1},\ldots,x_{n}$. Throughout the paper, we consider {\em right actions}.

The {\em Artin representation} $\rho_{\sf Artin}$ is a standard action of $B_{n}$ on $F_{n}$, given by
 \begin{gather*}
 (x_{j}) \rho_{\sf Artin}(\sigma_{i}) = 
\begin{cases}
x_{i}x_{i+1}x_{i}^{-1} & (j=i),\\
x_{i} & (j=i+1), \\
x_{j} & \text{(Otherwise)}.
\end{cases}
\end{gather*} 
A remarkable feature of $\rho_{\sf Artin}$ is that $\rho_{\sf Artin}(\sigma_i)$ preserves all free generators of $F_{n}$ except $x_{i}$ and $x_{i+1}$, and that $\rho_{\sf Artin}(\sigma_i)$ sends $x_{i}$ and $x_{i+1}$ to words on $\{x_{i},x_{i+1}\}$. 
In this paper we study an $\Aut(F_{n})$-representation of the braid group $B_{n}$ having the same property.

For $i=1,\ldots,n-1$, we say an automorphism $\otau: F_{n} \rightarrow F_{n}$ is {\em $i$-local} if $\otau(x_{j}) = x_{j}$ for $j \neq i,i+1$ and $\otau ( \langle x_{i},x_{i+1} \rangle ) = \langle x_{i},x_{i+1} \rangle$ where $ \langle x_{i},x_{i+1} \rangle$ represents the subgroup of $F_{n}$ generated by $x_{i}$ and $x_{i+1}$. In other words, $\otau \in \Aut(F_{n})$ is $i$-local if and only if there exists $\tau \in \Aut(F_{2})$ such that
\[ \otau = \textsf{id} * \tau * \textsf{id} : F_{n}= F_{i-1}*F_{2}*F_{n-i-1} \rightarrow F_{i-1}*F_{2}*F_{n-i-1} = F_{n}. \]
We say $\tau$ is the {\em core} of $\otau$, and write $\otau = T^{i}(\tau)$.

\begin{definition}
An $\Aut(F_{n})$ representation of the braid group $\Theta: B_{n} \rightarrow \Aut(F_{n})$ is {\em local} if $\Theta(\sigma_{i})$ is $i$-local for all $i=1,2,\ldots,n-1$. 
\end{definition}

For a local $\Aut(F_{n})$ representation $\Theta$, we denote the core of $\Theta(\sigma_{i})$ by $\tau^{\Theta}_{i}$ (or simply $\tau_{i}$), and we will say that $(\tau_{1},\ldots,\tau_{n-1}) \in \Aut(F_{n})^{n-1}$ {\em defines} $\Theta$.

A local $\Aut(F_{n})$ representation with $\tau_{1}= \tau_{2} = \cdots = \tau_{n-1}$ was introduced by Wada \cite{w} and called a {\em Wada-type representation}. 
In \cite[Corollary 1.2]{i} we determined all Wada-type representations (actually we have classified more general objects, called a solution of certain variant of set-theoretical Yang-Baxter equation).
We proved that, up to certain natural symmetries, there are exactly seven types of Wada-type representations as Wada conjectured in \cite{w}.

In this paper we give a classification of local $\Aut(F_{n})$-representations: 
In Theorem \ref{thm:main} we list up all local $\Aut(F_{3})$-representations, and give an oriented, labelled graph whose edge-path describes all local $\Aut(F_{n})$-representations.

One motivation of studying and introducing a local $\Aut(F_{n})$ representation comes from knot theory. By imitating the presentation of knot groups using the Artin representations \cite{b}, one obtains a group-valued invariant of knots and links from Wada-type representations \cite{w}. By generalizing the Artin representation in a different manner, Crisp and Paris constructed a group-valued invariant of knots and links \cite{cp}, and in \cite{i1} the author used a quandle variant of Crisp-Paris' construction to define a quandle-valued knot invariant. However, there are few applications of these invariants. Unfortunately, the classification of Wada-type representations shows that in most cases the group-valued invariants from Wada-type representations are determined by the fundamental group of the double branched covering of knots, so they are less interesting. 

We can use a local $\Aut(F_{n})$-representation to construct a group-valued invariant in a similar manner (Proposition \ref{prop:ginvariant}). Unfortunately, our classification again shows that group-valued invariants from local $\Aut(F_{n})$-representations are nothing new, because they coincide with the group-valued invariants from a Wada-type representation of the same type (see Theorem \ref{theorem:ginvariant}). 

Nevertheless, the classification and idea of local $\Aut(F_{n})$ representations are interesting in its own right. First, a local $\Aut(F_{n})$ representation can be seen as a generalization of Yang-Baxter equation.
 Second, an idea of local $\Aut(F_{n})$ representations to produce knot invariants can be investigated further, although the local $\Aut(F_{n})$ representation themselves cannot produce new interesting invariants. In the theory of quandles or state-sum invariants (see \cite{cjkls}), one treats all crossing in the same manner, in the sense that we assign the relations or weights of the same form at each crossings. It is an interesting problem to try to generalize such invariants by giving \emph{different} types of relations or weights at each crossing. Finally, investigating a homomorphism from $B_{n}$ to $\Aut(F_{n})$  would be interesting and will help us to understand relationships between these two groups.

\section{Classification of local $\Aut(F_{n})$ braid representations}
\label{sec:classification}

\subsection{Statement of the Classification theorem}

For $\tau,\kappa \in \Aut(F_{2})$ and $i,j \in \{1,\ldots,n-1\}$, $T^{i}(\tau)T^{j}(\kappa) = T^{j}(\kappa)T^{i}(\kappa)$ if $|i-j|>1$, so $(\tau_{1},\ldots, \tau_{n-1})\in \Aut(F_{2})^{n-1}$ defines a local $\Aut(F_{n})$ representation if and only if $(\tau_{i},\tau_{i+1})$ defines a local $\Aut(F_{3})$ representation for each $i$.
Hence to classify local $\Aut(F_{n})$ representations, it is sufficient to consider the case $n=3$. From now on, we study the problem when $(\tau,\kappa) \in \Aut(F_{2})^{2}$ defines a local $\Aut(F_{3})$ braid representation.

Let $F_{2}$ be the free group of rank two generated by $\{a,b\}$.
For a pair of automorphisms $(\tau,\kappa) \in \Aut(F_{2})^{2}$ that defines a local $\Aut(F_{3})$ representation $\Theta$, let $A,B,C,D$ be the reduced words representing $\tau(a),\tau(b),\kappa(a),\kappa(b)$, respectively. We will often say reduced words $(A,B,C,D)$ {\em define} a local $\Aut(F_{3})$ braid representation.

To give a precise statement of the classification, first we observe symmetries of local $\Aut(F_{3})$ representation derived from symmetries of the free groups and the braid groups, which we will call \emph{natural symmteries}.

In this paper we use the following notation. For words $V$ and $W$ we will write $V \equiv W$ if they are the same as words, and will write $V=W$ if they represents the same element in the free group. Thus, $abb^{-1} \not \equiv a$ but $abb^{-1} = a$, for example.\\

\begin{description}
\item[Inverse symmetry]
 {$ $}\\
Let $\tau'$ and $\kappa'$ be the core of $\Theta(\sigma_{1}^{-1})$ and $\Theta(\sigma_{2}^{-1})$ respectively.
Then $(\tau',\kappa')$ also defines a local $\Aut(F_{3})$ representation $\Theta^{-}$ which we call the {\em inverse representation} of $\Theta$.\\

\item[Swap symmetry]
 {$ $}\\
Let $A^{\sigma}$ be the word over $\{a^{\pm 1},b^{\pm 1}\}$ obtained by interchanging the letters $a$ and $b$. For example, $A^{\sigma} \equiv b^{-2}ab$ if $A \equiv a^{-2}ba$. $B^{\sigma}, C^{\sigma}$ and $D^{\sigma}$ are defined similarly.
Then $(D^{\sigma},C^{\sigma},B^{\sigma},A^{\sigma})$ also defines a local $\Aut(F_{3})$ representation $\Theta^{\sigma}$ which we call the {\em swap-dual representation} of $\Theta$.\\

\item[Backward symmetry]
 {$ $}\\
Let $\overleftarrow{A}$ be a reduced word obtained by reading $A$
backward. For example, $\overleftarrow{A} \equiv a^{2}bab^{-1}$ if $A \equiv b^{-1}aba^{2}$. $\overleftarrow{B}$, $\overleftarrow{C}$ and $\overleftarrow{D}$ are defined similarly. Then $(\overleftarrow{A}, \overleftarrow{B}, \overleftarrow{C}, \overleftarrow{D})$ defines a local $\Aut(F_{3})$ braid representation $\overleftarrow{\Theta}$ which we call the {\em backward-dual representation} of $\Theta$. \\

\end{description}

It is directly checked that these three symmetries commute each other. That is, $(\Theta^{-})^{\sigma}$, the swap-dual of the inverse $\Theta^{-}$ coincide with the $(\Theta^{\sigma})^{-}$, the inverse of the swap-dual $\Theta^{\sigma}$. 
So we simply write $\Theta^{-\sigma}$ to represent the swap and inverse dual of $\Theta$, for example. Thus for each local $\Aut(F_{3})$ representation $\Theta$, there may be eight different local $\Aut(F_{3})$-representations by considering natural symmetries.

The following is the main result of this paper.

\begin{theorem}[Classification of local $\Aut(F_{3})$ braid representation.]
\label{thm:main}
Up to natural symmetries, reduced words $(A,B,C,D)$ on $\{a^{\pm1},b^{\pm1}\}$ defines a local $\Aut(F_{3})$ braid representation if and only if $(A,B,C,D)$ is one of the following.
\begin{enumerate}
\item[($T$)] $(A,B,C,D)=(a,b,a,b)$.
\item[($T'$)] $(A,B,C,D)=(a,b^{-1},a^{-1},b)$.
\item[($A_{r}\,$-- 1)]  $(A,B,C,D)=(a^{r}b a^{-r}, a, a^{r}ba^{-r}, a)$ \ $(r \in \Z_{\geq 0})$.
\item[($A_{r}\,$-- 2)]  $(A,B,C,D) = (a^{r}ba^{-r}, a, a^{r} b^{-1}a^{-r},a^{-1})$ \ $(r \in \Z_{\geq 0})$.
\item[($A_{r}\,$-- 3)]  $(A,B,C,D) =  (a^{r}b^{-1}a^{-r}, a^{-1}, a^{-r} b^{-1}a^{r}, a^{-1})$ \ $(r \in \Z_{\geq 0})$.

\item[($B\,$-- 1)] $(A,B,C,D) = (b^{-1},a , b^{-1}, a)$.
\item[($B\,$-- 2)] $(A,B,C,D) = (b^{-1},a , b, a^{-1})$. 

\item[($C\,$-- 1)] $(A,B,C,D) = (ab^{-1}a,a,ab^{-1}a,a)$.
\item[($C\,$-- 2)] $(A,B,C,D) = (ab^{-1}a, a, aba, a^{-1})$.
\item[($C\,$-- 3)] $(A,B,C,D) = (aba, a^{-1}, aba, a^{-1})$.

\item[($D\,$-- 1)] $(A,B,C,D) = (a^{-1}b^{-1}a,b^{2}a,a^{-1}b^{-1}a,b^{2}a)$.
\item[($D\,$-- 2)] $(A,B,C,D) = (aba^{-1},b^{2}a^{-1},a^{-1}b^{-1}a,b^{2}a)$.
\item[($D\,$-- 3)] $(A,B,C,D) = (a^{-1}b^{-1}a,b^{2}a,a^{-1}ba,a^{-1}b^{2})$.
\item[($D\,$-- 4)] $(A,B,C,D) = (aba^{-1},b^{2}a^{-1},a^{-1}ba,a^{-1}b^{2})$.

\end{enumerate}

\end{theorem}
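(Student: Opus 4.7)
The plan is to translate the single braid relation $\sigma_1\sigma_2\sigma_1 = \sigma_2\sigma_1\sigma_2$ under $\Theta$ into word equations in $F_3$ and analyze them in two stages: first at the abelianized level $\Z^3$, then by lifting back to $\Aut(F_2)$. Setting $A = \tau(a)$, $B = \tau(b)$, $C = \kappa(a)$, $D = \kappa(b)$ and evaluating both sides of the braid relation on each of $x_1, x_2, x_3$ produces three word equations. The $x_1$-equation reads $A(A(x_1, x_2), C(B(x_1, x_2), x_3)) = A(x_1, C(x_2, x_3))$ and involves only $A, B, C$; the $x_3$-equation reads $D(B(x_1, x_2), x_3) = D(B(x_1, C(x_2, x_3)), D(x_2, x_3))$ and involves only $B, C, D$; the $x_2$-equation mixes all four words. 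A tuple $(A, B, C, D)$ defines a local $\Aut(F_3)$ representation iff all three hold simultaneously.

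First I would abelianize. Writing $\bar{\tau} = \begin{pmatrix} p & q \\ r & s \end{pmatrix}$ and $\bar{\kappa} = \begin{pmatrix} u & v \\ w & z \end{pmatrix}$ in $\GL(2,\Z)$, the block-diagonal embeddings into $\GL(3,\Z)$ must satisfy the braid relation. Direct matrix multiplication reduces this to an explicit system of polynomial equations in $\{p, q, r, s, u, v, w, z\}$ subject to $ps - qr, uz - vw \in \{\pm 1\}$. A case split on which of $q, r, v, w$ vanish yields a short finite list of possible abelianized pairs, each corresponding to one family in the theorem. Next I would lift to $\Aut(F_2)$. Since $\Aut(F_2)/\Inn(F_2) \cong \GL(2,\Z)$ and $\Inn(F_2) \cong F_2$, every candidate above a fixed abelianization has the form $(\tau, \kappa) = (\tau_0 c_\alpha, \kappa_0 c_\beta)$ with $\alpha, \beta \in F_2$ and $c_w$ denoting conjugation by $w$. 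Substituting and reducing converts the three word equations into equations for $\alpha, \beta$ in $F_2$. In generic cases these pin $\alpha, \beta$ down to finitely many choices, producing $(T)$, $(T')$ and the $B$-, $C$-, $D$-families; in the degenerate case where $\bar{\tau}$ and $\bar{\kappa}$ satisfy a strong commuting relation, the equations admit a one-parameter family of lifts parametrized by $r \in \Z_{\geq 0}$, yielding the $A_r$-families.

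Finally, the three natural symmetries generate a $(\Z/2)^3$-action on the set of local representations; quotienting by this action cuts the full list of lifts down to the fourteen orbit representatives in the statement. The converse direction — that each listed tuple $(A, B, C, D)$ actually defines a local representation — is a routine substitution check against the three word equations. The main obstacle is expected to be the lifting stage: the word equations for $\alpha, \beta$ in $F_2$ can involve reduced words with substantial cancellation, so one must argue case by case using the unique reduced-word normal form in $F_2$ and word-length estimates to exclude solutions outside the listed families. In particular, showing that the choices $\alpha = a^r$ with $r \geq 0$ exhaust the lifts producing the $A_r$-abelianization, with no spurious lifts, is the representative delicate step.
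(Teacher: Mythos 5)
Your overall architecture (abelianize to $\GL(2,\Z)$, then lift along $1\to\Inn(F_2)\to\Aut(F_2)\to\GL(2,\Z)\to 1$) is genuinely different from the paper's, which never isolates the abelianized problem as a separate stage: the paper works directly with the reduced words $A,B,C,D$, using exponent-sum comparisons only locally and ad hoc. Your stage 1 is sound and would indeed terminate in a finite list of abelianized pairs (the block-diagonal braid relation forces identities such as $(u-1)(1-s)=\pm 1$ once enough off-diagonal entries are nonzero, pinning the diagonal entries to $\{0,2\}$ and the off-diagonal products to fixed values), and it correctly predicts that the $A_r$-families all sit over the single abelianization given by the transposition matrix.

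The genuine gap is in stage 2. Writing $(\tau,\kappa)=(\tau_0 c_\alpha,\kappa_0 c_\beta)$ and substituting does not convert the three braid equations into conjugation equations for $\alpha,\beta$: the equations \textbf{[T]}, \textbf{[M]}, \textbf{[B]} involve iterated \emph{substitution} of whole words (e.g. $A(A(x,y),C(B(x,y),z))$), so after fixing the abelianization you are left with word equations in $F_2$ whose cancellation pattern depends on the full words, not just on the conjugators; your reformulation does not make them more tractable, and "argue case by case using normal forms and word-length estimates" is a restatement of the problem rather than a solution — it is where essentially all of the paper's Section 3 lives. In particular you are missing the one tool that makes that analysis close: since \textbf{[Aut]} forces $C(y,z)$ and $D(y,z)$ to freely generate $\langle y,z\rangle$, every non-empty reduced word in the "letters" $C(y,z),D(y,z)$ is a non-trivial element, so the reduced form of a substituted word retains a controlled block structure (the paper's Lemma 3.1). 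From this the paper extracts the decisive structural constraints $B\equiv b^{q}a^{\pm1}b^{q'}$, $C\equiv a^{r}b^{\pm1}a^{r'}$ with $qq'rr'=0$, after which the remaining equations become one-variable identities like $(x^{r}yx^{r'})^{r}=(x^{r}yx^{-r})^{r}$ that can be solved by primitivity and length arguments. Without an analogue of that lemma, your claim that the lifts are "pinned down to finitely many choices in generic cases" and that $\alpha=a^{r}$ exhausts the $A_r$-fibre is unsupported, and the proof does not go through as written.
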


It is convenient to express the classification of local $\Aut(F_{n})$ representations by using an oriented graph $\Gamma$ in Figure \ref{fig:classification}. 
For each vertex $v$ of $\Gamma$, we assign an element of $\Aut(F_{2})$, $\tau_{v}: F_{2} \rightarrow F_{2}$ by indicating a pair of reduced words $(\tau_v(a),\tau_v(b))$. Two vertices $v$ and $w$ are connected by an oriented edge $e$ oriented from $v$ to $w$ if and only if $(\tau_{v},\tau_{w})$ defines a local $\Aut(F_{3})$ representation. Up to natural symmetries, $(\tau_v,\tau_w)$ appears in the list in Theorem \ref{thm:main} so for each edge, for convenience we assign a labeling to indicate the correspondence to the list in Theorem \ref{thm:main}: for example, $\overleftarrow{(A_{r}-1)}$ means the backward-dual of the local $\Aut(F_{3})$-representation $(A_{r}-1)$ in Theorem \ref{thm:main}.

Up to natural symmetries, the set of local $\Aut(F_{n})$ representations is identified with the set of oriented edge-path of length $(n-2)$ in $\Gamma$: For an oriented edge-path $\gamma$ of $\Gamma$, assume that $\gamma$ passes the vertices $v_{1},v_{2},\ldots,v_{n-1}$ in this order. Then $(\tau_{v_{1}},\ldots,\tau_{v_{n-1}}) \in \Aut(F_{2})^{n-1}$ defines a local $\Aut(F_{n})$ representation $\Theta_{\gamma}$. Conversely, for a local $\Aut(F_{n})$ representation $\Theta$, take a vertex $v_{i}$ $(i=1,\ldots,n-1)$ of $\Gamma$ so that $\tau^{\Theta}_{i} = \tau_{v_{i}}$ $i=1,\ldots,n-1$. If we take $\gamma$ an oriented edge-path $\gamma$ of $\Gamma$ so that $\gamma$ passes the vertices $v_{1},\ldots,v_{n-1}$ in this order, then $\Theta = \Theta_{\gamma}$.

For each connected component of $\Gamma$, we assign the labeling $\{T,T',A_{r} (r\in \Z_{\geq 0}),C,D\}$ and we call a local $\Aut(F_{n})$ representation $\Theta$ is of type $\mathcal{L}$ if $\Theta$ corresponds to the oriented edge-path in the connected component labelled by $\mathcal{L} \in \{T,T',A_{r},B,C,D,\}$.  

We also note:
\begin{theorem}
A local $\Aut(F_{n})$-representation which is not of type $(T)$, $(T')$, $(A_{0})$, $(B)$ is faithful.
\end{theorem}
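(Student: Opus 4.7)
The plan has two parts: verifying non-faithfulness for the four excluded types, and establishing faithfulness for the remaining ones.

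\textbf{Non-faithful direction.} For each of $(T), (T'), (A_0), (B)$, a direct computation from the defining words $(A,B,C,D)$ shows that the core $\tau \in \Aut(F_{2})$ has finite order in $\Aut(F_{2})$, namely $1, 2, 2, 4$ respectively. For instance, in $(B\text{-}1)$ the relations $\tau(a) = b^{-1}$ and $\tau(b) = a$ give $\tau^{2}(a) = a^{-1}, \tau^{2}(b) = b^{-1}$, hence $\tau^{4} = \mathrm{id}$. Consequently each $\Theta(\sigma_{i}) = T^{i}(\tau)$ has the same finite order in $\Aut(F_{n})$, so $\sigma_{i}^{\mathrm{ord}(\tau)}$ lies in $\ker\Theta$. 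Since $\sigma_{i}$ has infinite order in $B_{n}$, the representation is not faithful, confirming that these types must be excluded.

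\textbf{Faithful direction.} Since natural symmetries preserve kernels, it suffices to verify faithfulness for one representative per symmetry-orbit within the $(A_{r})_{r \geq 1}, (C), (D)$ components of $\Gamma$. For each Wada-type representative (a loop in $\Gamma$, where $\tau_{1} = \tau_{2} = \cdots = \tau_{n-1}$), faithfulness follows from the classification in \cite{w} together with the main theorem of \cite{i}. For the non-Wada representatives (edge-paths traversing distinct vertices of $\Gamma$), a case-by-case inspection of Theorem \ref{thm:main} reveals a uniform structural feature: at each non-Wada edge $v \to w$ of $\Gamma$, the two cores satisfy $\tau_{w} = \iota \tau_{v} \iota$ for a simple involution $\iota \in \Aut(F_{2})$ of the form $a \mapsto a^{\pm 1}, b \mapsto b^{\pm 1}$. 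I plan to exploit this by constructing an automorphism $\Phi \in \Aut(F_{n})$ that conjugates $\Theta_{\gamma}$ into a Wada-type representation. When $\iota$ fixes $a$, the local automorphism $T^{i+1}(\iota)$ fixes $\langle x_{i}, x_{i+1} \rangle$ pointwise and thus commutes with $T^{i}(\tau_{v})$, giving the desired intertwining; when $\iota$ inverts $a$, one uses instead the global letter-wise inversion $\Phi(x_{j}) = x_{j}^{-1}$, which satisfies $\Phi T^{i}(\tau) \Phi^{-1} = T^{i}(\iota\tau\iota)$, possibly followed by a natural symmetry to align the result with a Wada-type edge-path.

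The main technical obstacle is verifying the intertwining globally along the edge-path $\gamma$ when $\iota$ does not fix $a$. In cases such as $(A_{r}\text{-}3)$ with $\iota: a \mapsto a^{-1}, b \mapsto b^{-1}$, or the analogous edges in the $(D)$-component, the global inversion $\Phi$ swaps cores along \emph{all} edges of $\gamma$ simultaneously, so the resulting representation is only conjugate to Wada-type after further application of the swap, backward, or inverse symmetry. Carrying this reduction out uniformly requires a finite but non-trivial case analysis, the bulk of which concerns the $(D)$-component, whose defining words ($b^{2}a, a^{-1}b^{-1}a$, etc.) make the identities in $F_{2}$ needed to certify the intertwining more delicate. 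These identities are routine to verify once stated, but exhausting the cases is the main labour of the proof.
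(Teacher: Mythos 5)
Your argument is correct in outline but takes a genuinely different route from the paper. The paper's proof is short and direct: it invokes Shpilrain's method \cite{sh} based on the Dehornoy ordering --- for any non-trivial $\beta$, either $\beta$ or $\beta^{-1}$ is $\sigma$-positive, and a direct calculation (the same one as in \cite{sh}, applied verbatim to the local setting) shows that $\Theta(\beta)\neq 1$ for $\sigma$-positive $\beta$ whenever $\Theta$ is not of the four excluded types; the non-faithfulness of those types is dismissed as clear, exactly as in your first paragraph. Your reduction-to-Wada strategy is a viable alternative, and the structural observation underlying it is genuinely true: in each of the components $(A_r)_{r\geq 1}$, $(C)$, $(D)$ of $\Gamma$, every vertex is of the form $\iota_{\epsilon,\epsilon'}\tau_{v_0}\iota_{\epsilon,\epsilon'}$ for a fixed base vertex $v_0$ and the sign involution $\iota_{\epsilon,\epsilon'}\colon a\mapsto a^{\epsilon}, b\mapsto b^{\epsilon'}$, and the edges of $\Gamma$ are exactly the pairs whose sign labels are compatible. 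The clean way to finish --- cleaner than your mixture of $T^{i+1}(\iota)$ and global inversion, which as you note breaks adjacent positions --- is to conjugate by a single diagonal automorphism $\Phi_{\epsilon}\colon x_j\mapsto x_j^{\epsilon_j}$; one checks $\Phi_{\epsilon}T^{i}(\tau)\Phi_{\epsilon}^{-1}=T^{i}(\iota_{\epsilon_i,\epsilon_{i+1}}\tau\iota_{\epsilon_i,\epsilon_{i+1}})$, and the edge condition of $\Gamma$ is precisely the consistency condition allowing a choice of $(\epsilon_1,\ldots,\epsilon_n)$ that makes all cores equal, so every edge-path is conjugate to the constant (Wada-type) path at $v_0$. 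What your route buys is a conceptual explanation of why local representations are no more faithful (and carry no more information) than Wada-type ones; what the paper's route buys is brevity and independence from any conjugation bookkeeping. Two corrections you should make: faithfulness of the relevant Wada-type representations is due to Shpilrain \cite{sh}, not to the classification results of \cite{w} and \cite{i}, which say nothing about kernels; and the global consistency of your intertwiner, which you flag as the main obstacle, must actually be carried out --- the de Bruijn-type labelling above does it uniformly, whereas ``global inversion followed by a natural symmetry'' does not suffice for paths visiting three or more distinct vertices.
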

\begin{proof}
The faithfulness of Wada-type representation is proven in \cite{sh}, by using the  theory of Dehornoy's ordering \cite{ddrw}: For every non-trivial braid $\beta$, either $\beta$ or $\beta^{-1}$ is represented by a word $w$ over $\{\sigma_{1}^{\pm 1},\ldots,\sigma_{n-1}^{\pm 1}\}$ which is $\sigma$-positive (there exists $i\in \{1,\ldots,n-1\}$ such that $w$ contains at least one $\sigma_{i}$ but does not contain any $\sigma_{1}^{\pm 1},\ldots,\sigma_{i-1}^{-1},\sigma_{i}^{-1}$.

Let $\Theta$ be a local $\Aut(F_{n})$ representation which is not of type $(T)$, $(T')$, $(A_{0})$, $(B)$ (note that a local $\Aut(F_{n})$ representation of type $(T)$, $(T')$, $(A_{0})$, $(B)$ are clearly non-faithful). A direct calculation (see \cite{sh}, for details) shows that for a braid $\beta$ represented by a $\sigma$-positive word, $\Theta(\beta) \neq 1$.
\end{proof}

\begin{figure}
\[
[T] \ \ \ 
\xymatrix{
(a,b) \ar@(ul,ur)^{(T)}
}
\ \ \ \ \ \ \ \ [T'] \ \  
\xymatrix{
(a,b^{-1}) \ar[r]^{(T')} & (a^{-1},b)
}
\]

\vspace{0.3cm}

\[
\lower4ex\hbox{  $[A_r]$}
\ \ \ \ 
\xymatrix{
 (a^{r}ba^{-r},a) \ar@(ul,ur)^{(A_{r}-1)} \ar[rr]^{(A_{r}-2)}& & (a^{r}b^{-1}a^{-r},a^{-1}) \ar[d]^{(A_{r}-2)^{-\sigma}} \ar@/^2mm/[lld]^<<<<<<<{\;\;\;(A_{r}-3)}\\
 (a^{-r}b^{-1}a^{r},b) \ar[u]^{\overleftarrow{(A_{r}-2)}{}^{-\sigma}} \ar@/^3mm/[urr]^<<<<<{\;\;\;\overleftarrow{(A_{r}-3)}} & &
 (a^{-r}ba^{r},b) \ar[ll]^{\overleftarrow{(A_{r}-2)}}
\ar@(dl,dr)_{\overleftarrow{(A_{r}-1)}} \\
}\]

\[
[B] \ \ \
\xymatrix{
(b^{-1},a) \ar@(ul,ur)^{(B-1)}  \ar@/^2mm/[rr]^{(B-2)} &  & (b,a^{-1}) \ar@(ur,ul)_{(B-1)^{\sigma}} \ar@/^2mm/[ll]^{(B-2)^{-}}
}\]

\[\xymatrix{
[C] \ \ \ 
(ab^{-1}a,a) \ar@(ul,ur)^{(C-1)}  \ar@/^2mm/[rr]^{(C-2)} &  & (aba,a^{-1}) \ar@(ur,ul)_{(C-3)} \ar@/^2mm/[ll]^<<<<<{(C-2)^{-\sigma}}
}\]

\vspace{0.3cm}

\[
\lower9ex\hbox{$[D]$} \ \ \ 
\xymatrix{
  & (a^{-1}ba,b^{2}a) \ar@(ul,ur)^{(D-1)} \ar[rd]^{\;\;\overleftarrow{(D-2)}{}^{-\sigma}}&\\
(aba^{-1},b^{2}a^{-1}) \ar[ur]^{\overleftarrow{(D-3)}{}^{-\sigma}} 
\ar@/^3mm/[rr]^{\overleftarrow{(D-4)}{}^{-\sigma}}
& & (a^{-1}ba,a^{-1}b^{2})\ar@/^3mm/[ll]^{(D-4)^{-\sigma}} \ar[dl]^{(D-3)^{-\sigma}} \\
 & (ab^{-1}a^{-1},ab^{2}) \ar@(dl,dr)_{(D-1)^{-\sigma}} \ar[ul]^{(D-2)^{-\sigma} \; \;}&
}\]
\caption{Graph $\Gamma$ describing the clasisfication of local $\Aut(F_{n})$-representations}
\label{fig:classification}
\end{figure}
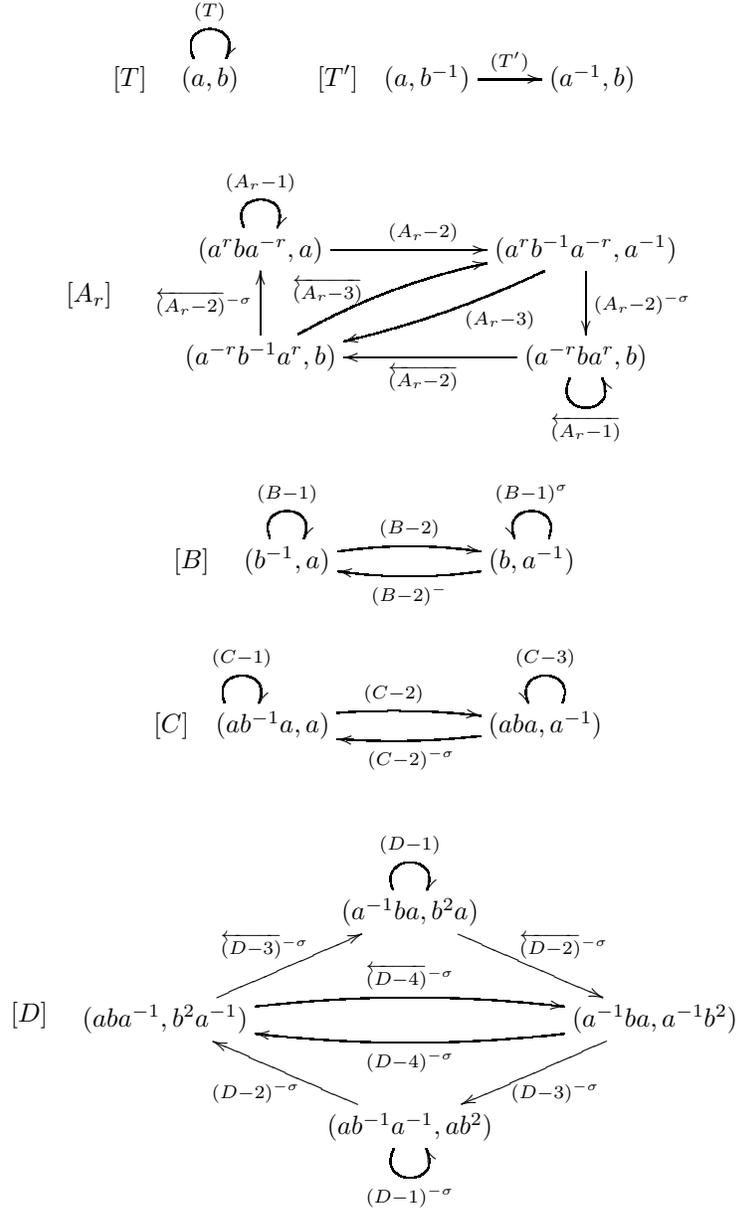

\subsection{Classification: Outline}

Before proceeding to prove the classification theorem, we briefly explain an outline of the proof.

First of all, we review our notations.
A word $W$ over an alphabet $\mathcal{A}$ is \emph{reduced} if $W$ contains no subword of the form $aa^{-1}$ and $a^{-1}a$ $(a \in \mathcal{A})$. By removing subwords of the form $aa^{-1}$ or $a^{-1}a$, every word $W$ is changed to the unique reduced word which we denote by $\red(W)$. For two words $W$ and $V$, $W=V$ if and only if $\red(W) \equiv \red(V)$.

For a word $W$ over an alphabet $\{a^{\pm1},b^{\pm1}\}$, and words $A$ and $B$ over another alphabet $\mathcal{X}=\{x^{\pm 1}, y^{\pm 1}\ldots \}$, $W(A,B)$ represents the word over $\mathcal{X}$ obtained from $W$ by substituting each letter $a^{\pm 1}$ and $b^{\pm 1}$ with the word $A^{\pm 1}$ and $B^{\pm 1}$, respectively. For example, if $W \equiv aab$, $A \equiv xy$ and $B \equiv yx^{-1}$, then $W(A,B)\equiv xyxyyx^{-1}$.

For a word $W$ over $\{x^{\pm 1},y^{\pm 1},z^{\pm 1}, \ldots \}$, we will write 
\[ W \equiv x^{k_1}y^{k_2}*\cdots \]
to say that $W$ is written as $W \equiv x^{k_1}y^{k_2}W'$, where $W' \not \equiv yW'', y^{-1}W''$. (Here $W'$ might be empty word.)
On the other hand, we will simply use dots, like
\[ W \equiv x^{k_1}y^{k_2} \cdots \]
to say that $W$ is written as $W \equiv x^{k_1}y^{k_2}W'$.
For example, $W \equiv x \cdots x$ just says that $W \equiv xW'x$ for some word $W'$, but if we write $W \equiv x * \cdots * x$, then it means that $W \equiv xW'x$ with $W' \not \equiv x^{\pm 1}W'',W''x^{\pm 1}$.

The proof of the classification theorem goes as follows. By direct computation, $(A,B,C,D)$ defines a local $\Aut(F_{3})$ representation if and only if the following four conditions are satisfied.

\begin{enumerate}
\item[{\bf[T]}:] $A ( A(x,y), C( B(x,y),z)) =  A (x, C(y,z) )$.
\item[{\bf[M]}:] $B ( A(x,y), C( B(x,y), z) ) = C ( B(x, C(y,z) ), D(y,z) )$.
\item[{\bf[B]}:] $D (B(x,y), z) = D ( B(x, C(y,z) ), D(y,z) )$.
\item[{\bf[Aut]}:] The homomorphism $\tau$ of $F_{2}=\langle a,b\rangle$ defined by $a \mapsto A$, $b \mapsto B$ is an automorphism. Similarly, the homomorphism $\kappa$ of $F_{2}$ defined by $a \mapsto C$, $b \mapsto D$ is an automorphism.
\end{enumerate}

In particular, the last condition {\bf[Aut]} implies that all elements $A,B,C,D$ are primitive, that is, they admit no non-trivial root in $F_{2}$.

Let $(A,B,C,D)$ be reduced words which defines a local $\Aut(F_{3})$ representation $\Theta$. The proof of classification theorem splits into the following two propositions which we will be proven in Section \ref{sec:proof}.

\begin{proposition}
\label{prop:caseI} If $B$ does not contain $a^{\pm 1}$, then $\Theta$ is of type  $(T)$ or $(T')$. 
\end{proposition}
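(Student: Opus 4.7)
The plan is to split on the two possibilities $B\equiv b$ and $B\equiv b^{-1}$, which are exhaustive because $B$ is a reduced word in $\{b^{\pm 1}\}$ and must be primitive by \textbf{[Aut]}. These cases will yield types $(T)$ and $(T')$ respectively.

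For $B\equiv b$, the identity $B(u,v)=v$ collapses \textbf{[M]} and \textbf{[B]} (at $(y,z)=(a,b)$) to $\kappa(a)=\kappa^{2}(a)$ and $\kappa(b)=\kappa^{2}(b)$, so $\kappa^{2}=\kappa$ and $\kappa=\textsf{id}$, giving $C\equiv a$ and $D\equiv b$. Equation \textbf{[T]} then reads $A(A(x,y),y)=A(x,y)$; at $(x,y)=(a,b)$ this says the endomorphism $\phi\colon a\mapsto A,\,b\mapsto b$ fixes both $A$ and $b$. Since $\{A,b\}$ is a basis of $F_{2}$, $\phi=\textsf{id}$ and $A\equiv a$, giving type $(T)$.

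For $B\equiv b^{-1}$, a direct reduction of \textbf{[M]} and \textbf{[B]} at $(y,z)=(a,b)$, using the involution $\nu\colon a\mapsto a^{-1},\,b\mapsto b$, yields the braid relation $\kappa\nu\kappa=\nu\kappa\nu$ in $\Aut(F_{2})$. The main obstacle is to extract $\kappa=\nu$ from this. I would first pass to $\mathrm{Out}(F_{2})\cong\GL_{2}(\Z)$, let $M$ denote the image of $\kappa$, and analyze $M\nu_{0}M=\nu_{0}M\nu_{0}$ with $\nu_{0}=\mathrm{diag}(-1,1)$; expanding to four entry-wise relations and using $\det M=\pm 1$, a short case analysis ($qr=0$ vs.\ $qr\neq 0$) forces $M=\nu_{0}$. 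Lifting, $\kappa=\nu\iota_{g}$ for some $g\in F_{2}$, where $\iota_{g}$ denotes conjugation by $g$. Using $\nu\iota_{g}\nu^{-1}=\iota_{\nu(g)}$ and $\iota_{g}\iota_{h}=\iota_{gh}$ simplifies the braid relation to $\nu(g)=g^{2}$. Abelianization forces $g\in[F_{2},F_{2}]$; writing $g=hwh^{-1}$ with $w$ cyclically reduced gives $|g^{2}|=2|h|+2|w|$ and $|g|=2|h|+|w|$, while $|\nu(g)|=|g|$ since $\nu$ is length-preserving, so $|g|=|g^{2}|$ forces $|w|=0$ and $g=1$. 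Therefore $\kappa=\nu$ and $C\equiv a^{-1}$, $D\equiv b$.

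With $\kappa$ pinned down, \textbf{[T]} at $(x,y)=(a,b)$ reduces to $A(A,b)=A(a,b^{-1})$. Writing $\mu\colon a\mapsto a,\,b\mapsto b^{-1}$, the composition $\tau\mu$ fixes $b$ (because $\tau(b)=b^{-1}$), so $\tau\mu\in\mathrm{Stab}(b)\subset\Aut(F_{2})$. The standard description of this stabilizer forces $A=b^{m}a^{\epsilon}b^{n}$ for some $m,n\in\Z$, $\epsilon\in\{\pm 1\}$. Substituting into $A(A,b)=A(a,b^{-1})$: for $\epsilon=-1$ the left-hand side contains $a$ while the right-hand side contains $a^{-1}$, a contradiction; for $\epsilon=+1$ the equation simplifies to $b^{2m}ab^{2n}=b^{-m}ab^{-n}$, forcing $m=n=0$ and $A\equiv a$. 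This gives $(A,B,C,D)=(a,b^{-1},a^{-1},b)$, type $(T')$.
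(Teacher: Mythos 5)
Your proof is correct, and it reaches the two solutions by a genuinely different route from the paper's in the harder half. You determine $\kappa$ first from \textbf{[M]} and \textbf{[B]} (which, once $B\equiv b^{\pm1}$, involve only $C,D$) and only then attack \textbf{[T]}; the paper goes the other way, first forcing $A\equiv a$ from \textbf{[T]} by exponent-sum comparisons carried out with $C$ still unknown. Your argument for $A\equiv a$ when $B\equiv b$ (an endomorphism fixing the basis $\{A,b\}$ is the identity) is cleaner than the paper's computation. The real divergence is in the case $B\equiv b^{-1}$: both proofs arrive at the braid relation $\iota\kappa\iota=\kappa\iota\kappa$, but the paper deduces $\kappa^{2}=\mathrm{id}$, observes that $\langle\iota,\kappa\rangle$ is a quotient of $S_{3}$, and excludes $S_{3}$ by combining the projection to $\GL(2;\Z)$ with Meskin's classification of order-three automorphisms of $F_{2}$; you instead solve the braid relation entry-wise in $\GL(2;\Z)$ to pin the abelianized matrix to $\mathrm{diag}(-1,1)$ (I checked the case analysis: $p-s=1$ gives a non-integral $s$, so $q=r=0$ and $M=\nu_{0}$), then kill the inner discrepancy via $\nu(g)=g^{2}$, abelianization, and the cyclic-reduction length count $|g^{2}|=2|h|+2|w|>|g|$ unless $g=1$. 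Your route avoids the citation of Meskin and is essentially self-contained for that step, at the price of invoking Nielsen's theorem that $\ker(\Aut(F_{2})\to\GL(2;\Z))=\Inn(F_{2})$ and, later, the standard description of the stabilizer of $b$ in $\Aut(F_{2})$ as $\{a\mapsto b^{m}a^{\epsilon}b^{n},\ b\mapsto b\}$; both are classical, but in a written version you should state them explicitly with a reference, since the stabilizer description is exactly where the form of $A$ comes from (the paper asserts the analogous form of $A$ directly from \textbf{[Aut]}). The concluding computations ($b^{2m}ab^{2n}=b^{-m}ab^{-n}$ forcing $m=n=0$, and the $a$- versus $a^{-1}$-exponent obstruction when $\epsilon=-1$) are correct.
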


\begin{proposition}
\label{prop:caseIII}
If $B$ contains at least one $a^{\pm 1}$, then up to natural symmetries, $\Theta$ is of type $(A_{r})$, $(B)$, $(C)$ or $(D)$.
\end{proposition}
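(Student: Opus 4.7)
The plan is to analyze the reduction of the braid identities [T], [M], [B] letter-by-letter using the word-level notation $W \equiv x^{k_1}y^{k_2} * \cdots$ introduced above, exploiting the invertibility condition [Aut] which in particular implies primitivity of $A, B, C, D$. Since $B$ contains at least one $a^{\pm 1}$, substituting $B(x,y)$ into the inner slots of [T], [M], [B] injects $x$-letters into the words on both sides, and tracking precisely where cancellations occur tightly constrains the form of $A$, $C$, and $B$ itself.

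The analysis splits naturally according to whether $B$ also contains $b^{\pm 1}$. In the first sub-case $B$ is a word in $a^{\pm 1}$ only, and primitivity forces $B \equiv a^{\pm 1}$; after applying inverse and backward symmetries I may assume $B \equiv a$. Then [T] reduces to $A(A(x,y), C(x,z)) = A(x, C(y,z))$. Examining the leftmost and rightmost letters of both sides (using the $*$-notation to control cancellation) together with the primitivity of $A$, I would show that $A$ is either of the form $a^{r} b^{\varepsilon} a^{-r}$ with $r \geq 0$ and $\varepsilon = \pm 1$, giving types $(A_r)$ or $(B)$, or of the form $a b^{\varepsilon} a$, giving type $(C)$. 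A symmetric analysis of [B] imposes the same shape on $C, D$, and the relation [M] combined with [Aut] pins the remaining free parameters down to one of the listed compatible choices.

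In the second sub-case $B$ contains $b^{\pm 1}$ as well, and I would write $B \equiv b^{k_1} a^{\varepsilon_1} b^{k_2} a^{\varepsilon_2} \cdots$. Comparing the leftmost letters of the two sides of [T] after substitution, the first letter of $A$ must interact with the first letter of $B$ in a highly restricted way, and iterating this type of argument (alternately for leftmost and rightmost positions) together with primitivity forces $B$ to have the short form $b^{k_1} a^{\pm 1} b^{k_2}$; a further reduction step in [T] rules out $(k_1,k_2)=(1,1)$, leaving $\{k_1,k_2\}=\{0,2\}$, so up to natural symmetries $B \equiv b^2 a^{\pm 1}$. A parallel argument then forces $A$ to be a conjugate of $b^{\pm 1}$ by $a^{\pm 1}$, i.e. $A \equiv a^{\pm 1} b^{\pm 1} a^{\mp 1}$, and finally [M], [B] and [Aut] together fix $C$ and $D$ to the four combinations listed under type $(D)$.

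The main obstacle I anticipate is the second sub-case: the bookkeeping needed to exclude all longer alternating forms of $B$ and similarly to bound the conjugating prefixes of $A$ is delicate, because substitutions in [M] produce rather long reduced words whose cancellation pattern depends on many sub-cases. A separate difficulty in the first sub-case is that the family $(A_r)$ is infinite in $r$, so no a priori length bound on $A$ is available; this rules out any purely length-based induction, and one must exploit the specific non-abelian structure of [T] to force the conjugating power of $a$ in $A$ to match that in $C$.
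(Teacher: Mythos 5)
Your overall strategy --- comparing reduced-word expressions of both sides of \textbf{[T]}, \textbf{[M]}, \textbf{[B]} and using the primitivity supplied by \textbf{[Aut]}, with a case split on whether $B$ contains $b^{\pm 1}$ --- is the same as the paper's. But two steps in your outline are genuine gaps. First, in the sub-case $B\equiv a^{\pm 1}$ your reduction ``after applying inverse and backward symmetries I may assume $B\equiv a$'' is false: the solutions $(A_r\,$--$\,3)$ and $(C\,$--$\,3)$ have $B\equiv a^{-1}$ and are not carried to solutions with $B\equiv a$ by any of the natural symmetries (the theorem lists them as separate normal forms), so both signs must be treated. More substantively, you propose to extract the shape of $A$ from \textbf{[T]} alone, i.e.\ from $A(A(x,y),C(x,z))=A(x,C(y,z))$, which still contains the unknown $C$; you give no argument for why this determines $A$ up to the claimed forms. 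The paper's route is much more economical: \textbf{[B]} first forces $D\equiv a^{\pm1}$, after which \textbf{[M]} collapses to $A(x,y)^{\beta}=x^{\beta r}y^{\gamma\delta}x^{\beta r'}$ and hands you $A$ explicitly in terms of the exponents of $C$; only then is \textbf{[T]} used, and it yields exactly $r'=-r$ or $(r,r')=(1,1)$. Deferring \textbf{[M]} and \textbf{[B]} to the end, as you do, makes \textbf{[T]} carry a load it is not shown to bear.

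The more serious gap is in your second sub-case. The inference ``a further reduction step in \textbf{[T]} rules out $(k_1,k_2)=(1,1)$, leaving $\{k_1,k_2\}=\{0,2\}$'' is a non sequitur: excluding $(1,1)$ from the set of all integer pairs does not leave $\{0,2\}$, and no bound forcing $k_1+k_2=2$ or $|k_i|\le 2$ has been established. In the paper, \textbf{[T]} only yields $B\equiv b^{q}a^{\beta}b^{q'}$ (via the swap-dual of the argument giving $C\equiv a^{r}b^{\gamma}a^{r'}$), and \textbf{[M]} yields $qq'rr'=0$; the exponent $2$ is invisible in \textbf{[T]}. It appears only when one solves the coupled equation $z^{s}y^{q}=(z^{s}y)^{s}(y^{r}z^{\gamma}y^{r'})^{q}$ extracted from \textbf{[B]}, which simultaneously involves the exponents of $C$ and $D$ (Claim 3.3 of the paper, forcing $(q,r,\gamma,r',s)=(2,-1,-1,1,2)$). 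Your plan to pin down $B$ first, independently of $C$ and $D$, cannot succeed because $q=2$ is a property of the coupled system, not of $B$ alone; as stated, this step of the proposal would fail.
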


Finally we explain reasons why we are able to solve the equations {\bf[T]}, {\bf[M]}, and {\bf[B]}. An important feature of equations {\bf[T]} and {\bf[B]} is that the order of the length of words are different.

The left side of {\bf[T]} is a word of the form $W(W(W(\bullet,\bullet),\bullet),\bullet)$, where $W$ is one of $A,B,C,D$. This implies that the length of the left side of  {\bf[T]} is order of $l^{3}$, where $l$ denotes the length of $W$.
On the other hand, the right side of {\bf[T]} is a word of the form $W(W(\bullet,\bullet),\bullet)$ so its length is order of $l^{2}$. This observation suggests that either $l$ is small or the left side of the word {\bf[T]} must admit many cancellations -- as the length $l$ increases, the number of necessary cancellations must grow quite rapidly. 
On the other hand, the number of possible cancellations are limited because all words $A,B,C,D$ are reduced. Thus {\bf[T]} and {\bf[B]} implicitly and vaguely says that the length of the words cannot be large, or these words have strong symmetries which leads to many cancellations.  
  
\begin{remark}
In \cite{i}, the author classified, the solutions of certain variant of the set-theoretical Yang-Baxter equation, which is essentially equal to Wada-type representations without requiring invertibility, the condition {\bf[Aut]}.
The main idea and the method in the proof Theorem \ref{thm:main} are almost the same as the proof of \cite[Theorem 1.1]{i}. However, thanks to {\bf[Aut]} we are often able to simplify arguments. We also remark that the classification of local $\Aut(F_{n})$ representations does not use the classification of Wada-type representations.
\end{remark}

\section{Proof of Classification theorem}
\label{sec:proof}
In this section we prove Theorem \ref{thm:main}, by proving Proposition \ref{prop:caseI} and Proposition \ref{prop:caseIII}.
Let $(A,B,C,D)$ be reduced words that defines a local $\Aut(F_{3})$-representation.

\subsection{Key observation}

Let us regard two words $C(y,z)$ and $D(y,z)$ as alphabets $\mC$ and $\mD$.
Temporary we regard the word $C(B(x,C(y,z)),D(y,z))$, the right side of {\bf[M]}, as a word over $\{x^{\pm 1}, \mC^{\pm 1}, \mD^{\pm 1} \}$.

Then we write $C(B(x,C(y,z)),D(y,z)) = C(B(x,\mC),\mD)$ in the form
\[ C(B(x,\mC),\mD) =  W_{0}(\mC,\mD) x^{n_{1}} W_{1}(\mC,\mD) x^{n_{2}}*\cdots *x^{n_{k}}W_{k}(\mC,\mD),\]
where $W_{i}$ and $n_{i}$ satisfy the following conditions.
\begin{enumerate}
\item[(i)] $W_{0}(\mC ,\mD)$ and $W_{k}(\mC,\mD)$ are reduced words over $\{\mC^{\pm 1},\mD^{\pm 1}\}$.
$W_{0}(\mC,\mD)$ is an empty word if and only if $B \equiv a^{\pm 1} \cdots$ and $C\equiv a^{\pm 1}\cdots$.
\item[(ii)] For $i=1,\ldots,k-1$, $W_{i}(\mC,\mD)$ is a reduced, non-empty word over $\{\mC^{\pm 1},\mD^{\pm 1}\}$.
\item[(iii)] $n_{1},\ldots,n_{k}$ are non-zero integers.
\end{enumerate}

Now we back to see $C(B(x,\mC),\mD)$ as a word over $\{x^{\pm 1}, y^{\pm 1}, z^{\pm 1}\}$, by substituting $\mC$ and $\mD$ with $C(x,y)$ and $D(x,y)$.
Let $W_{i}\equiv W_{i}(C(y,z),D(y,z))$ be a word over $\{ y^{\pm 1}, z^{\pm 1}\}$.
By \textbf{[Aut]}, $C(y,z)$ and $D(y,z)$ generate $F_{2}=\langle y,z\rangle$. Since $W_{i}(\mC,\mD)$ is a non-empty reduced word, as an element of $F_{2}=\langle y,z\rangle$, $W_{i} \neq 1$. This shows the following.

\begin{lemma}
\label{lemma:key}
\[ \red( C(B(x,C(y,z)),D(y,z)) ) \equiv \red(W_{0}) x^{n_{1}} \red(W_{1}) x^{n_{2}}*\cdots * x^{n_{k}} \red(W_{k}) \]
\end{lemma}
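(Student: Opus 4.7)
The plan is to show that the right-hand side of the asserted identity is already the fully reduced form of $C(B(x,C(y,z)),D(y,z))$. By construction, the equation
\[
C(B(x,\mC),\mD) \equiv W_0(\mC,\mD)\, x^{n_1}\, W_1(\mC,\mD)\, x^{n_2}*\cdots * x^{n_k}\, W_k(\mC,\mD)
\]
holds as an identity of words over the enlarged alphabet $\{x^{\pm 1},\mC^{\pm 1},\mD^{\pm 1}\}$. Substituting $\mC \mapsto C(y,z)$ and $\mD \mapsto D(y,z)$ produces an identity of words over $\{x^{\pm 1},y^{\pm 1},z^{\pm 1}\}$, and the task is to verify that reducing it amounts to reducing each substituted block $W_i = W_i(C(y,z),D(y,z))$ separately while leaving every $x^{n_i}$ untouched.

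First I would observe that since the words $C,D$ involve only the letters $a^{\pm 1}, b^{\pm 1}$, after substitution each $W_i$ is a word over $\{y^{\pm 1},z^{\pm 1}\}$ alone, containing no occurrence of $x^{\pm 1}$, whereas every block $x^{n_i}$ consists solely of $x^{\pm 1}$. Therefore no cancellation can cross the interface between an $x^{n_i}$-block and an adjacent $W_i$-block. Reducing each $W_i$ to $\red(W_i)$ independently is hence the first stage of the global reduction, and no further reduction at block boundaries is possible unless some $\red(W_i)$ turns out to be the empty word.

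The only remaining concern is therefore whether an intermediate $\red(W_i)$ with $1 \le i \le k-1$ could be empty, since then $x^{n_i}$ and $x^{n_{i+1}}$ would merge into $x^{n_i+n_{i+1}}$ and possibly cascade further. This is precisely where condition $\textbf{[Aut]}$ intervenes: since $\kappa \in \Aut(F_2)$, the words $C(y,z)$ and $D(y,z)$ freely generate $F_2 = \langle y,z\rangle$, so the substitution $\mC \mapsto C(y,z)$, $\mD \mapsto D(y,z)$ induces an isomorphism from $F_2 = \langle \mC,\mD\rangle$ onto $F_2 = \langle y,z\rangle$. By condition (ii), $W_i(\mC,\mD)$ is a non-empty reduced word for $1 \le i \le k-1$, hence a non-trivial element of $\langle \mC,\mD\rangle$; its image $W_i$ is therefore non-trivial in $\langle y,z\rangle$, so $\red(W_i)$ is non-empty.

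Combining the two observations, no fusion of $x$-blocks can occur, and $\red(W_0)\,x^{n_1}\,\red(W_1)\,x^{n_2}\cdots x^{n_k}\,\red(W_k)$ is a fully reduced word equal to $C(B(x,C(y,z)),D(y,z))$, which is the claim. The only subtle step, and hence the main obstacle, is invoking $\textbf{[Aut]}$ correctly to rule out an empty intermediate $\red(W_i)$; everything else is straightforward bookkeeping about where the letter $x$ can and cannot appear.
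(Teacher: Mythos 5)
Your proof is correct and follows essentially the same route as the paper: both arguments reduce each substituted block $W_i$ separately, note that no cancellation can cross an $x^{n_i}$-block boundary, and invoke \textbf{[Aut]} to see that $C(y,z)$ and $D(y,z)$ generate $\langle y,z\rangle$ freely, so each non-empty reduced $W_i(\mC,\mD)$ maps to a non-trivial element and $\red(W_i)$ cannot vanish. No gaps; your write-up just spells out the bookkeeping slightly more explicitly than the paper does.
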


A similar calculation applies for $D(B(x,C(y,z)),D(y,z)))$.
Similarly, by regarding $A(x,y)$ and $B(x,y)$ as alphabets $\mA$ and $\mB$, we can apply a similar calculation for $A(A(x,y),C(B(x,y),z))$ and $B(A(A(x,y),C(B(x,y),z)))$.
We will often use (sometimes without referring) Lemma \ref{lemma:key} to compute reduced word expression of left or right sides of equations {\bf [T]}, {\bf [M]} and {\bf [B]}.

\subsection{The case $B$ contains no $a^{\pm 1}$}

\begin{proof}[Proof of Proposition \ref{prop:caseI}]
By {\bf[Aut]} and the hypothesis that $B$ does not contain $a^{\pm 1}$, $B \equiv b^{\beta}$ $(\beta=\pm 1)$.
By {\bf[Aut]}, this implies that $A$ must be of the form
\[ A \equiv b^{p}a^{\alpha} b^{p'} \;\;\; (p,p' \in \Z, \alpha \in \{\pm 1\}) . \]

By {\bf[T]}, we get 
\[ C(y^{\beta},z)^{p} (y^{p}x^{\alpha}y^{p'})^{\alpha}C(y^{\beta},z)^{p'} = C(y,z)^{p} x^{\alpha} C(y,z)^{p'}. \]

By comparing the exponent of $x$, we get $\alpha^{2}= \alpha$ so $\alpha = 1$. Hence the equation {\bf[T]} is actually written as
\[ C(y^{\beta},z)^{p} y^{p} x y^{p'}C(y^{\beta},z)^{p'} = C(y,z)^{p} x C(y,z)^{p'}. \]
This equality holds if and only if two equalities
\begin{equation}
\label{eqn:A}  \left\{
\begin{array}{l}
C(y^{\beta},z)^{p} y^{p} = C(y,z)^{p} \\
y^{p'}C(y^{\beta},z)^{p'} =C(y,z)^{p'} 
\end{array}
\right.
\end{equation}
hold. 
Let $m$ be the exponent sum of the $y$ in $C(y,z)$. By comparing the exponent of $y$ in (\ref{eqn:A}), we get
\begin{gather*}
\begin{cases} 
\beta mp+p=mp \\
\beta mp'+p'=mp'. 
\end{cases}
\end{gather*}
If $p\neq 0$, then $\beta m+1=m$. Since $\beta =\pm 1$ and $m$ is an integer, this is impossible so $p=0$. Similalry, $p'=0$ hence we conclude $A \equiv a$.

To determine $C$ and $D$ we consider the following two cases according to the value of $\beta$. Let $\kappa:F_{2}\rightarrow F_{2}$ be an automorphism defined by $\kappa(a)=C$ and $\kappa(b)=D$.\\

{\bf Case 1: $\beta=+1$.}\\

By {\bf[M]} and {\bf[B]}, 
\[ \left\{
\begin{array}{l}
C(y,z) = C(C(y,z),D(y,z))\\
D(y,z) = D(C(y,z),D(y,z))
\end{array}\right.
\] 
These equalities show that $\kappa^{2}=\kappa$, so $\kappa = id$.
Thus we get
\begin{itemize}
\item[$(T)$]  $(A,B,C,D) = (a,b,a,b)$.\\
\end{itemize}

{\bf Case 2: $\beta =-1$.}\\

By {\bf[M]} and {\bf[B]}, 
\[ \left\{
\begin{array}{l}
C(y^{-1},z)^{-1} = C(C(y,z)^{-1},D(y,z))\\
D(y^{-1},z) = D (C(y,z)^{-1},D(y,z)).
\end{array}\right.
\] 
Let $\iota:F_{2} \rightarrow F_{2}$ be the involution defined by $\iota(a)=a^{-1}$, $\iota(b)=b$. Then the above equalities say that $\iota\kappa\iota = \kappa \iota\kappa$. Moreover, $\kappa$ has order at most two since  $\kappa^{2}=\kappa^{2}\iota\kappa \kappa^{-1} \iota = \iota \kappa \kappa^{-1} \iota = id$. 

Let $G$ be the subgroup of $\Aut(F_{2})$ generated by $\iota$ and $\kappa$. 
The relations $\kappa^{2}=\iota^{2}=id$ and $\iota\kappa\iota = \kappa \iota\kappa$ show that $G$ is a quotient of the symmetric group $S_{3}$ so $G$ is either $\Z_{2}$ or $S_{3}$.

Let $\pi: \Aut(F_{2}) \rightarrow \textrm{GL}(2;\Z)$ be the projection.
By direct calculation, $\pi(\kappa)=\left(\begin{array}{cc} \pm 1  &0 \\ 0 & \pm 1  \end{array}\right)$, so $\pi(G) \cong \Z_{2}$.  
On the other hand, an order $3$ element of $\Aut(F_{2})$ is conjugate to $\rho:F_{2} \rightarrow F_{2}$ given by $\rho(a)=a^{-1}b, \rho(b)=a^{-1}$ \cite{mes}. This shows that $G$ cannot contain an element of order three, so $G$ is isomorphic to $\Z_{2}$. Hence $\kappa = \iota$ and we get 
\begin{itemize}
\item[$(T')$]  $(A,B,C,D) = (a,b^{-1},a^{-1},b)$.
\end{itemize}
\end{proof}

\subsection{The case $B$ contains $a^{\pm 1}$}

In this section we prove Proposition \ref{prop:caseIII}, which treats the case $B$ contains at least one $a^{\pm 1}$.
If $C$ does not contain $b^{\pm 1}$, then the swap-dual of $\Theta$ satisfies the assumption of Proposition \ref{prop:caseI}, so we may also assume that $C$ contains at least one $b^{\pm 1}$.

\begin{proposition}
 \label{prop:caseII}
Under the assumption of Proposition \ref{prop:caseIII}, $A$ contains at least one $b^{\pm 1}$ and $D$ contains at least one $a^{\pm 1}$
\end{proposition}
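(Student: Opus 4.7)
By the swap symmetry $\Theta \mapsto \Theta^{\sigma}$, which interchanges the roles of $A$ and $D$ (up to the letter-swap $a \leftrightarrow b$) and sends $(B,C) \mapsto (C^{\sigma}, B^{\sigma})$, the hypotheses of this subsection are preserved while the two desired conclusions are exchanged, so it suffices to prove that $A$ contains $b^{\pm 1}$. Suppose for contradiction that $A$ does not. By \textbf{[Aut]} we must have $A \equiv a^{\alpha}$ with $\alpha \in \{\pm 1\}$, and substituting into \textbf{[T]} yields $x^{\alpha^{2}} = x^{\alpha}$, forcing $\alpha = 1$ and $A \equiv a$.

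Next, use the standard description of the stabilizer of $a$ in $\Aut(F_{2})$ (its image in $\textrm{Out}(F_{2}) \cong \GL(2,\Z)$ must fix the vector $(1,0)$, and lifting past the centralizer $\langle a\rangle$ of $a$ gives the explicit form) to write $B \equiv a^{p} b^{\epsilon} a^{q}$ for some $p, q \in \Z$ and $\epsilon \in \{\pm 1\}$. Taking the $x$-exponent sum of the two sides of \textbf{[M]} produces
\[ (p+q)(1 + \epsilon \cdot \epsilon_{a}(C)) = (p+q) \cdot \epsilon_{a}(C), \]
and the factor $1 + \epsilon_{a}(C)(\epsilon - 1)$ cannot vanish for integer $\epsilon_{a}(C)$ and $\epsilon = \pm 1$, so $p+q = 0$. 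Hence $B \equiv a^{p} b^{\epsilon} a^{-p}$, and $p \neq 0$ since $B$ contains $a^{\pm 1}$ by hypothesis.

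Finally, derive a contradiction from \textbf{[M]} by comparing the leading maximal $x$-block of the two sides (and, in one residual subcase, the trailing one). Writing $C \equiv a^{r_{0}} b^{s_{1}} a^{r_{1}} \cdots b^{s_{k}} a^{r_{k}}$ with $k \geq 1$, since $C$ contains $b^{\pm 1}$ by the swap-dual reduction at the start of this subsection, Lemma \ref{lemma:key} and an analogous expansion of the left-hand side give
\[ \text{LHS} = x^{p} \cdot C(x^{p} y^{\epsilon} x^{-p}, z)^{\epsilon} \cdot x^{-p}, \qquad \text{RHS} = C(x^{p} C(y,z)^{\epsilon} x^{-p}, D(y,z)), \]
each a reduced word in $\{x^{\pm 1}, y^{\pm 1}, z^{\pm 1}\}$. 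A case analysis on the triple $(\epsilon, r_{0}, r_{k})$ shows that the outer $B$-wrapper doubles the initial or terminal $x^{p}$ on the LHS, producing either a leading $x^{2p}$-block with no RHS counterpart, or an initial $x^{p}$-block while the RHS begins with $D(y,z)^{s_{1}}$, a nontrivial word in $\{y^{\pm 1}, z^{\pm 1}\}$. The single subcase in which the leading blocks happen to coincide is $\epsilon = -1$, $r_{0} \neq 0$, $r_{k} = 0$; there the LHS terminates in $x^{-2p}$ while the RHS ends with $D(y,z)^{s_{k}}$, again giving a contradiction via the last letter. In every case \textbf{[M]} fails, so $A$ must contain $b^{\pm 1}$, and the other conclusion follows by swap-duality. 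The only real bookkeeping hurdle is this trailing-block subcase; elsewhere a single-letter comparison at the left already suffices.
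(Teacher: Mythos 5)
Your proof is correct and follows essentially the same route as the paper's: reduce to the claim about $A$ by swap-duality, force $A \equiv a$ and $B \equiv a^{p}b^{\epsilon}a^{-p}$ via \textbf{[T]}, \textbf{[Aut]} and the $x$-exponent count in \textbf{[M]}, then obtain a contradiction by comparing the extremal maximal $x$-blocks of the reduced forms of the two sides of \textbf{[M]}. Your explicit isolation of the one residual subcase ($\epsilon=-1$, $r_{0}\neq 0$, $r_{k}=0$) where the leading comparison fails and the trailing block must be used is a welcome sharpening of the paper's ``the other cases are proven similarly.''
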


\begin{proof}

We show $A$ contains at least one $b^{\pm 1}$. The assertion for $D$ is obtained by considering the swap-dual. 

Assume contrary, $A$ does not contain $b^{\pm 1}$, so $A \equiv a^{\alpha}$ $(\alpha =\pm 1)$.
By the same argument as Proposition \ref{prop:caseI}, by \textbf{[T]} and \textbf{[Aut]} $\alpha= +1$ and $B \equiv a^{q}b^{\beta}a^{q'}$ $(\beta \in \{\pm 1\},\; q,q' \in \Z )$. Since $B$ contains at least one $a^{\pm 1}$, $qq' \neq 0$. By taking the backward-dual if necessary, we may assume that $q\neq 0$. Then the equation \textbf{[M]} is written as
\begin{equation}
\label{eqn:caseIIM}
x^{q}C(x^{q}y^{\beta}x^{q'},z)^{\beta}x^{q'} = C(x^{q}C(y,z)^{\beta}x^{q'},D(y,z))
\end{equation}

We show that the equation (\ref{eqn:caseIIM}) has no solutions.

Let $m$ be the exponent sum of $a$ in the word $C$.
By comparing the exponent of $x$ in (\ref{eqn:caseIIM}),
we have $(q+q')(m\beta +1) = m(q+q')$.
If $q+q' \neq 0$, then we get $m\beta+1 = m$. Since $m$ is an integer and $\beta \in \{ \pm 1\}$, this cannot happen so $q'=-q$.

Let us put
\[ C(x,y) \equiv x^{c}y^{r}*\cdots \equiv *\cdots y^{r'}x^{c'} \ \ (r,r' \neq 0). \]
Then the left side of (\ref{eqn:caseIIM}) is 
\begin{gather*}
x^{q}C(x^{q}y^{\beta}x^{-q},z)^{\beta}x^{-q} =
\begin{cases}
x^{2q}y^{c}x^{-q}z^{r}*\cdots = \cdots *z^{r'}x^{q}y^{c'}x^{-2q} & (\beta = +1)\\
x^{2q}y^{-c'}x^{-q}z^{r'}*\cdots = \cdots *z^{-r}x^{q}y^{-c}x^{-2q}& (\beta = -1)\\
\end{cases}
\end{gather*}
whereas the right side of (\ref{eqn:caseIIM}) is
\[ C(x^{q}C(y,z)^{\beta}x^{-q},D(y,z)) = 
x^{q}C(y,z)^{\beta c}x^{-q}D(y,z)^{r}\cdots = \cdots D(y,z)^{r'}x^{q}C(y,z)^{\beta c'}x^{-q}.
\]
By comparing the reduced word expression of both sides, we conclude (\ref{eqn:caseIIM}) has no solutions. For example, assume that $\beta = -1$, $c=0$ and $c' \neq 0$. Then the reduced word expression of the left side of (\ref{eqn:caseIIM}) is $x^{2q}*\cdots$, whereas the right side of (\ref{eqn:caseIIM}) begins with a letter $y^{\pm 1}$ or $z^{\pm 1}$, contradiction. The other cases are proven similarly.

\end{proof}

Summarizing, we have the following constraints for word $A,B,C$ and $D$:
\begin{gather}
\label{eqn:hypo}
\begin{cases}
A \text{ contains at least one } b^{\pm1}.\\
B \text{ contains at least one } a^{\pm1}.\\
C \text{ contains at least one } b^{\pm1}.\\
D \text{ contains at least one } a^{\pm1}.
\end{cases}
\end{gather}

To proceed further, we observe the following much stronger constraints for the words $B$ and $C$.
\begin{lemma}
\label{lem:restriction}
Under the assumption of Proposition \ref{prop:caseIII}, the reduced words $B$ and $C$ satisfy the following.
\begin{enumerate}
\item  $B \equiv b^{q}a^{\beta} b^{q'}, \;\;\; C \equiv a^{r}b^{\gamma} a^{r'}$  $(q,q',r,r' \in \Z,\  \beta,\gamma \in \{\pm 1\})$.
\item  $qq'rr'=0$.
\end{enumerate}
\end{lemma}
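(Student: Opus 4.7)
\emph{Setup.} Write $B$ and $C$ in their alternating-block reduced forms
\[ B \equiv b^{q_0} a^{\beta_1} b^{q_1} \cdots a^{\beta_k} b^{q_k}, \qquad C \equiv a^{r_0} b^{\gamma_1} a^{r_1} \cdots b^{\gamma_m} a^{r_m}, \]
with $k, m \geq 1$, internal $q_i, r_i \neq 0$, and $\beta_i, \gamma_i \neq 0$. Part (1) amounts to showing $k = m = 1$ and $|\beta_1| = |\gamma_1| = 1$; part (2) asks that at least one of the outer exponents $q_0, q_k, r_0, r_m$ vanishes. My plan is to apply Lemma \ref{lemma:key} to the master equation {\bf [M]} twice — once from the RHS to read off the $x$-block structure of the common reduced word $U$, and once from the LHS (with the roles of $x$ and $z$ swapped) to read off its $z$-block structure — and then compare the two descriptions.

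\emph{Strategy for part (1).} Apply Lemma \ref{lemma:key} to the RHS $C(B(x,\mC),\mD)$: substituting $B(x,\mC) = \mC^{q_0} x^{\beta_1} \mC^{q_1} \cdots x^{\beta_k} \mC^{q_k}$ at each $a$-letter of $C$, the $x$-block list of $U$ is dictated by $(\beta_1,\ldots,\beta_k)$ repeated once per $a^{\pm 1}$-letter of $C$, with merging possible only inside a power $B(x,\mC)^{r_i}$ of $|r_i| \geq 2$ and only when the intermediate block $\mC^{q_k+q_0}$ collapses. Symmetrically, applying the analogous decomposition to the LHS $B(\mathbf{A}, C(\mathbf{B},z))$ with $\mathbf{A} = A(x,y)$, $\mathbf{B} = B(x,y)$ as alphabets and $z$ as outer variable, the $z$-block list of $U$ is dictated by $(\gamma_1,\ldots,\gamma_m)$ repeated once per $b^{\pm 1}$-letter of $B$. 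Moreover, the spacer between any two consecutive $z$-blocks of $U$ is a $\{x,y\}$-word (from LHS) while that between any two consecutive $x$-blocks is a $\{y,z\}$-word (from RHS), so $x$- and $z$-blocks interleave in $U$ in a constrained alternating pattern. Counting $x$- and $z$-blocks in $U$ from both descriptions, together with these alternation constraints, forces $k = 1$ and $m = 1$. With $k = 1$ no merging is possible on the RHS, so every $x$-block of $U$ is exactly $x^{\pm \beta_1}$; invoking {\bf [Aut]} — which requires $B = \tau(b)$ to be primitive in $F_{2}$ — rules out $|\beta_1| \geq 2$, and a symmetric argument gives $|\gamma_1| = 1$.

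\emph{Strategy for part (2).} With $B \equiv b^q a^\beta b^{q'}$ and $C \equiv a^r b^\gamma a^{r'}$, $\beta,\gamma \in \{\pm 1\}$, substitute back into {\bf [M]} and compare the $y$-content of the spacer words between consecutive $x$- and $z$-blocks of $U$. The LHS spacers are built from $A(x,y)$ and $B(x,y)$, which by Proposition \ref{prop:caseII} and (\ref{eqn:hypo}) both contain $y^{\pm 1}$ at positions governed by the outer structure of $B$; the RHS spacers are built from $C(y,z) = y^r z^\gamma y^{r'}$ and $D(y,z)$, in which every $y$-block is placed next to a $z$-block in a controlled way. Matching these descriptions position-by-position — invoking {\bf [Aut]} so that $C(y,z)$ and $D(y,z)$ form a free basis of $F_{2}$ — shows that if none of $q, q', r, r'$ vanish, the LHS forces a $y$-letter sitting between two $x$-blocks with no adjacent $z$-block, a configuration the RHS cannot produce; hence $qq'rr' = 0$.

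\emph{Main obstacle.} The principal technical difficulty throughout is the bookkeeping of possible cancellations inside $B(x,\mC)^{r_i}$ and $C(\mathbf{B},z)^{q_i}$ for $|r_i|, |q_i| \geq 2$, where several $x$- or $z$-blocks can merge and obscure the counts used above; ruling out pathological ``lucky'' collapses — and in particular showing that even the most favorable merging still mismatches the blocks forced by the opposite side — is where I expect to spend most of the work, and where the {\bf [Aut]} hypothesis (guaranteeing that $A(x,y), B(x,y), C(y,z), D(y,z)$ remain genuine free bases of $F_{2}$) is indispensable.
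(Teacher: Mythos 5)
Your overall framework---compute both sides of one braid-relation equation via Lemma \ref{lemma:key} and compare block structures---is the right kind of idea, but you have attached it to the wrong equation for part (1), and the steps you defer to ``bookkeeping'' are exactly where the argument breaks. The paper proves part (1) from {\bf [T]}, not {\bf [M]}, and the choice matters: as the outline in Section \ref{sec:classification} explains, {\bf [T]} pits a cubic-length word against a quadratic one, and its right side $A(x,C(y,z))$ replaces $a$ by the single letter $x$, so its reduced form is completely explicit. Comparing the segment between the first two $z$-blocks of the two sides of {\bf [T]} then yields $B(x,y)^{r_2}=y^{r_2}$ with $r_2\neq0$ whenever $C$ has two $b$-blocks, contradicting (\ref{eqn:hypo}). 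In {\bf [M]} both sides are doubly composite (both cubic in length), and your alternation-plus-counting argument does not survive the mergers you yourself flag as the main obstacle: the spacer between consecutive $z$-blocks on the side $B(A(x,y),C(B(x,y),z))$ is a word such as $\red(\mB^{r_m}\mA^{\pm1}\mB^{r_0})$ or $\mB^{r_m+r_0}$, and at this stage $A$ is only known to contain $b^{\pm1}$ (Proposition \ref{prop:caseII}), so these spacers can be pure $y$-powers or empty; in the actual type-$(D)$ solutions ($C\equiv a^{-1}ba$, so $r_m+r_0=0$) adjacent $z$-blocks genuinely merge. Hence $x$- and $z$-blocks need not alternate, and even granting alternation the count $k\sum|r_i|=m\sum|q_i|\pm1$ does not force $k=m=1$. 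Separately, your derivation of $|\beta_1|=1$ from {\bf [Aut]} is false: $b^{q_0}a^{\beta_1}b^{q_1}$ is conjugate to $a^{\beta_1}b^{q_0+q_1}$, which is primitive whenever $|q_0+q_1|=1$ regardless of $\beta_1$ (e.g.\ $a^{2}b$ is primitive), so primitivity alone cannot bound $\beta_1$.

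For part (2) you are closer in spirit to the paper, but the pivotal claim---that the left side forces ``a $y$-letter between two $x$-blocks with no adjacent $z$-block,'' a configuration the right side cannot produce---is neither established nor robust: on the side $C(B(x,\mC),\mD)$ the spacer between consecutive $x$-blocks inside $B(x,\mC)^{r}$ is $\mC^{q+q'}=C(y,z)^{q+q'}$, which is empty when $q'=-q$, so the forbidden configuration cannot be read off from generic mid-word spacers. The paper avoids all of this with a prefix comparison: assuming $qq'rr'\neq0$, the side $B(A(x,y),C(B(x,y),z))$ begins $y^{Q}x^{\pm1}*\cdots$ with $Q\in\{q,-q'\}$ nonzero, while $C(B(x,C(y,z)),D(y,z))$ begins $\red(C(y,z)^{Q'})x^{\pm1}*\cdots$, forcing $y^{Q}=C(y,z)^{Q'}$ and hence that $C$ contains no $z^{\pm1}$, against (\ref{eqn:hypo}). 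I recommend redoing part (1) from {\bf [T]} and part (2) as a prefix comparison; as written, the proposal is a plan whose decisive steps are either unproved or incorrect.
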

\begin{proof}
By (\ref{eqn:hypo}), we may put
\begin{gather*}
\begin{cases}
A \equiv a^{p}b^{p'}*\cdots & (p' \neq 0) \\
C \equiv a^{r}b^{\gamma}a^{r'}b^{\gamma'} *\cdots & (\gamma \neq 0).
\end{cases}
\end{gather*}

Assume that $r', \gamma' \neq 0$. Then $\red (C(x,y)^{p'}) \equiv x^{r_1} y^{\gamma_1} x^{r_2}y^{\gamma_2}*\cdots $ $(r_1,r_2,\gamma_1,\gamma_2 \neq 0)$, so
the right side of {\bf [T]} is
\[
\red (A(x,C(y,z)) \equiv x^{p}y^{r_1} z^{\gamma_1} y^{r_2}z^{\gamma_2}*\cdots.
\]
On the other hand, by Lemma \ref{lemma:key}, the left side of \textbf{[T]} is 
\[
 \red (A(A(x,y),C(B(x,y),z))) = \red( A(x,y)^{p}B(x,y)^{r_{1}}) z^{\gamma_1} \red(B(x,y)^{r_{2}}) x^{\gamma_2}* \cdots,
\]
so $B(x,y)^{r_{2}} = y^{r_{2}}$. Since $r_{2} \neq 0$, this implies $B(x,y) \equiv y$, which contradicts (\ref{eqn:hypo}). Therefore $\gamma'$ should be zero, so $C \equiv a^{r}b^{\gamma}a^{r'}$. By considering the swap-dual we conclude $B \equiv b^{q}a^{\beta}b^{q'}$.

To show (2), assume contrary that $qq'rr' \neq 0$. By Lemma \ref{lemma:key}, the right side of \textbf{[M]} is given by
\[
\red ( B(A(x,y),C(B(x,y),z) )
\equiv \left\{
\begin{array}{ll}
\red( B(x,y)^{r}) z^{\gamma} *\cdots & (q>0) \\
\red( B(x,y)^{-r'}) z^{-\gamma} *\cdots & (q<0).\\  
\end{array}
\right.
\]
For a non-zero integer $M$, $\red( B(x,y)^{M}) \equiv y^{Q}x^{\pm l}*\cdots$, where $Q= q$ or $-q'$, so the right side of \textbf{[M]} is written as
\[ 
\red ( B(A(x,y),C(B(x,y),z) ) \equiv y^{Q}x^{\pm 1}*\cdots \;\;\;\; (Q = q,-q')
\]

On the other hand, by Lemma \ref{lemma:key}, the left side of \textbf{[M]} is of the form
\[
\red C ( B(x, C(y,z) ), D(y,z) )
\equiv \left\{
\begin{array}{ll}
\red( C(y,z)^{q}) x^{\pm1} *\cdots & (r>0) \\
\red( C(y,z)^{-q'}) x^{\pm1} *\cdots & (r<0)\\  
\end{array}
\right.
\]
hence
\[ y^{Q} = C(y,z)^{Q'} \ \ \ (Q,Q' \in \{q,-q'\}). \]
Since $q,q'\neq 0$, this shows that $C(y,z)$ contains no $z^{\pm 1}$, which contradicts (\ref{eqn:hypo}).
\end{proof}

Now we are ready to complete the proof of Proposition \ref{prop:caseIII}.

\begin{proof}[Proof of Proposition \ref{prop:caseIII}]
By Lemma \ref{lem:restriction}, we already know $B \equiv b^{q}a^{\beta} b^{q'}$ and $C \equiv a^{r}b^{\gamma} a^{r'}$ $(q,q',r,r' \in \Z, \ \beta,\gamma \in \{\pm 1\})$. Since $qq'rr'=0$, by taking the swap and backward duals of $\Theta$ if necessary, we may assume that $q'=0$, namely, $B\equiv b^{q}a^{\beta}$.\\

{\bf Case 1: $q=0$.}\\

In this case, $B\equiv a^{\beta}$ $(\beta = \pm 1)$.
By \textbf{[B]},
\begin{equation}
\label{eqn:III-D}
 D(x^{\beta},z)= D (x^{\beta},D(y,z)).
\end{equation}

Assume that $D$ contains $b^{\pm 1}$.
Then the right side of (\ref{eqn:III-D}) contains at least one $y^{\pm 1}$, but the left side does not, contradiction. Thus $D$ cannot contain $b^{\pm 1}$, so by \textbf{[Aut]}, $D \equiv a^{\delta}$ $(\delta= \pm 1)$.

Now the equation \textbf{[M]} is written as
\begin{equation}
\label{eqn:III-A}
 A(x,y)^{\beta} = x^{\beta r}y^{\gamma\delta}x^{ \beta r'}.
\end{equation}

Now we study the following four cases according to the value $\beta$ and $\gamma\delta$.\\

{\it Subcase 1-1: $\beta=+1$ and $\gamma\delta=+1$.}\\

By (\ref{eqn:III-A}), $A(x,y) \equiv x^{r}yx^{r'}$.
The equation \textbf{[T]} is written as  
\[ (x^{r}yx^{r'})^{r}x^{r}z^{\gamma}x^{r'}  (x^{r}yx^{r'})^{r'} = x^{r}y^{r}z^{\gamma}y^{r'}x^{r'}. \]
This is satisfied if and only if the following two equalities hold.
\begin{gather*}
\begin{cases}
(x^{r}yx^{r'})^{r}x^{r} = x^{r}y^{r}\\
x^{r'} (x^{r}yx^{r'})^{r'} = y^{r'}x^{r'}
\end{cases}
\end{gather*}

These equations are written as
\begin{gather*}
\begin{cases}
(x^{r}yx^{r'})^{r} = (x^{r}yx^{-r})^{r}\\
(x^{r}yx^{r'})^{r'} = (x^{-r'}yx^{r'})^{r'},
\end{cases}
\end{gather*}
so \textbf{[T]} is satisfied if and only if $r'=-r$, and we conclude
\begin{enumerate}
\item[($A_{r}-1$)]  $\;\;\;(A,B,C,D) = (a^{r}ba^{-r}, a , a^{r}ba^{-r},a) \;\;\; (r \in \Z_{\geq 0}) $.
\item[($A_{r}-2$)]  $\;\;\;(A,B,C,D) = (a^{r}ba^{-r}, a , a^{r}b^{-1}a^{-r},a^{-1}) \;\;\; (r \in \Z_{\geq 0})$. 
\end{enumerate}
Here we remark that $(A_{-r}-1)=\overleftarrow{(A_{r}-1)}$ and $(A_{-r}-2)=\overleftarrow{(A_{r}-2)}$ so up to natural symmetry we can take $r \geq 0$.\\

{\it Subcase 1-2: $\beta=+1$ and $\gamma\delta=-1$.}\\

By (\ref{eqn:III-A}), $A(x,y) \equiv x^{r}y^{-1}x^{r'}$ so \textbf{[T]} is written as  
\[ (x^{r}y^{-1}x^{r'})^{r}x^{-r'}z^{-\gamma}x^{-r} (x^{r}y^{-1}x^{r'})^{r'} = x^{r}y^{-r'}z^{-\gamma}y^{-r}x^{r'}. \]
Consequently we get two equations
\begin{gather*}
\begin{cases}
(x^{r}y^{-1}x^{r'})^{r}x^{-r'} = x^{r}y^{-r'}\\
x^{-r} (x^{r}y^{-1}x^{r'})^{r'} = y^{-r}x^{r'}
\end{cases}
\end{gather*}

The first equation is written as
\begin{equation}
\label{eqn:III-1-2}
(x^{r}y^{-1}x^{r'})^{r} = x^{r}y^{-r'}x^{r'}. 
\end{equation}

If $r'=-r$, then (\ref{eqn:III-1-2}) is written as
\[ (x^{r}y^{-1}x^{-r})^{r} = (x^{r}y^{-1}x^{-r})^{-r}. \]
This shows $-r=r$, so $r=r'=0$.

If $r \neq -r'$, then the right side of (\ref{eqn:III-1-2}) is primitive, so $|r|\leq 1$. By direct calculations (\ref{eqn:III-1-2}) is satisfied only if $(r,r')=(1,1)$ or $(r,r')=(0,0)$. 
In the case $(r,r')=(0,0)$ we get
\begin{enumerate}
\item[($B-1$)] $\;\;\; (A,B,C,D) = (b^{-1}, a , b ^{-1}, a)$.
\item[($B-2$)] $\;\;\;(A,B,C,D) = (b^{-1}, a , b, a^{-1})$.
\end{enumerate}
and in the case $(r,r')=(1,1)$ we get
\begin{enumerate}
\item[($C-1$)] $\;\;\; (A,B,C,D) = (ab^{-1}a,a , ab^{-1}a, a)$.
\item[($C-2$)] $\;\;\; (A,B,C,D) = (ab^{-1}a,a , aba, a^{-1})$. \\
\end{enumerate}

{\it Subcase 1-3: $\beta = -1$ and $\delta\gamma=+1$.}\\

By (\ref{eqn:III-A}), $A(x,y) \equiv x^{r'}y^{-1}x^{r}$ so \textbf{[T]} is written as  
\[ (x^{r'}y^{-1}x^{r})^{r'}x^{r'}z^{-\gamma}x^{r} (x^{r'}y^{-1}x^{r})^{r} = x^{r'}y^{-r'}z^{-\gamma}y^{-r}x^{r}. \]
Consequently we get two equations
\begin{gather*}
\begin{cases}
(x^{r'}y^{-1}x^{r})^{r'}x^{r'} = x^{r'}y^{-r'}\\
x^{r} (x^{r'}y^{-1}x^{r})^{r} = y^{-r}x^{r}.
\end{cases}
\end{gather*}

As in the Subcase 1-1, these two equations are written as
\begin{gather*}
\begin{cases}
(x^{r'}y^{-1}x^{r})^{r'} = (x^{r'}y^{-1}x^{-r'})^{r'}\\
(x^{r'}y^{-1}x^{r})^{r} = (x^{-r}y^{-1}x^{r})^{r}.
\end{cases}
\end{gather*}
This shows $r=-r'$ and we get 
\begin{enumerate}
\item [$(A_{r}-2)^{-\sigma}$] $\;\;\; (A,B,C,D) = (a^{-r}b^{-1}a^{r},a^{-1},a^{r}ba^{-r},a).$ 
\item [$(A_{r}-3)$] $\;\;\; (A,B,C,D) = (a^{-r}b^{-1}a^{r},a^{-1},a^{r}b^{-1}a^{-r},a^{-1}). $ \\
\end{enumerate}

{\it Subcase 1-4: $\beta=-1$, $\gamma\delta=-1$.}\\

By (\ref{eqn:III-A}), $A(x,y) \equiv x^{r'}yx^{r}$ so the equation \textbf{[T]} is written as  
\[ (x^{r'}yx^{r})^{r'}x^{-r}z^{\gamma}x^{-r'} (x^{r'}yx^{r})^{r} = x^{r'}y^{r}z^{\gamma}y^{r'}x^{r}.\]
Consequently we get two equations
\begin{gather*}
\begin{cases}
(x^{r'}yx^{r})^{r'}x^{-r} = x^{r'}y^{r}\\
x^{-r'} (x^{r'}yx^{r})^{r} = y^{r'}x^{r}
\end{cases}
\end{gather*}

The first equation is written as
\begin{equation}
\label{eqn:III-1-4}
(x^{r'}yx^{r})^{r'} = x^{r'}y^{r}x^{r}
\end{equation}

If $r'=-r$, then (\ref{eqn:III-1-4}) is written as
\[ (x^{-r}yx^{r})^{-r} = (x^{-r}yx^{-r})^{r}. \]
Thus $-r=r$, so we conclude $r=r'=0$.
If $r \neq -r'$, then the right side of (\ref{eqn:III-1-4}) is primitive so $|r|\leq 1$. By direct calculation, (\ref{eqn:III-1-4}) is satisfied only if $(r,r')=(1,1)$ or $(r,r')=(0,0)$. 

If $(r,r')=(0,0)$ then we get 
\begin{itemize}
\item[$(B-2)^{-}$] $(A,B,C,D) = (b,a^{-1},b^{-1},a)$.
\item[$(B-1)^{\sigma}$] $(A,B,C,D) = (b,a^{-1},b,a^{-1})$.
\end{itemize}
and if $(r,r')=(1,1)$ and $(c,d)=(-1,1)$, then we get
\begin{enumerate}
\item[$(C-2)^{-\sigma}$] $(A,B,C,D) = (aba, a^{-1},ab^{-1}a, a)$.
\item[$(C-3)$] $ (A,B,C,D) = (aba, a^{-1},aba, a^{-1})$.\\
\end{enumerate}

{\bf Case 2: $q \neq 0$.}\\

Let us put $B \equiv y^{q}x^{\beta}$ and $D \equiv b^{s}a^{\delta}b^{s'}a^{\delta'}b^{s''}a^{\delta''}*\cdots$ $(\delta \neq 0)$. 
\begin{claim}
$\delta,\delta',\delta'',\ldots \in \{\pm 1,0\}$.
\end{claim}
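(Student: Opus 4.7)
The plan is to derive the claim from the boundary equation $\textbf{[B]}$, namely
\[
D(B(x,y),z) = D(B(x, C(y,z)), D(y,z)).
\]
Substituting $B(x,y) \equiv y^q x^\beta$ (the form forced by our current case, with $q\neq 0$ and $\beta\in\{\pm 1\}$), the left-hand side becomes $D(y^q x^\beta, z)$ and the right-hand side becomes $D(C(y,z)^q x^\beta, D(y,z))$. Throughout I write $D$ in the alternating form $D \equiv b^{s_0} a^{\delta_1} b^{s_1} a^{\delta_2} b^{s_2} a^{\delta_3} \cdots$ and aim to show that the claim $\delta_i \in \{\pm 1, 0\}$ holds by ruling out $|\delta_i|\geq 2$.

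First I would compute the left-hand side directly. Substituting $a \mapsto y^q x^\beta$, $b \mapsto z$ gives
\[
D(y^q x^\beta, z) \equiv z^{s_0} (y^q x^\beta)^{\delta_1} z^{s_1} (y^q x^\beta)^{\delta_2} z^{s_2} \cdots,
\]
and this expression is already reduced, since the three alphabets $\{x\},\{y\},\{z\}$ never clash. In particular, within any single block $(y^q x^\beta)^{\delta_i}$ with $|\delta_i| \geq 2$, adjacent occurrences of $x^{\pm\beta}$ are separated by exactly $y^{\pm q}$, i.e. by a pure power of $y$ containing no $z^{\pm 1}$.

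Next I would analyse the right-hand side via Lemma \ref{lemma:key}. Treating $C(y,z)$ and $D(y,z)$ as abstract alphabets $\mC,\mD$, the word $D(\mC^q x^\beta, \mD)$ is already a reduced word in $\{x^{\pm 1}, \mC^{\pm 1}, \mD^{\pm 1}\}$, and inside a block $(\mC^q x^\beta)^{\delta_i}$ with $|\delta_i|\geq 2$ two consecutive $x^{\pm\beta}$ are separated by $\mC^{\pm q}$. Lemma \ref{lemma:key} then tells us that after resubstituting $\mC \mapsto C(y,z)$ and $\mD \mapsto D(y,z)$, the reduced form of the right-hand side still has exactly those two $x^{\pm\beta}$ separated by $\red(C(y,z)^{\pm q})$. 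Since $C \equiv a^r b^\gamma a^{r'}$ with $\gamma \neq 0$, and $q\neq 0$, this separator necessarily contains at least one $z^{\pm 1}$.

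The contradiction is then immediate. The $x$-letters on the two sides are in canonical bijective correspondence with the $a$-letters of $D$, because the only cancellations possible in either reduction involve the letters $y$ and $z$, never $x$. Hence the word sitting between two consecutive $x$-letters arising from a single $a^{\delta_i}$ must agree on both sides; on the left it is a pure $y$-power, on the right it contains a $z^{\pm 1}$, which is impossible. Therefore no $\delta_i$ can have absolute value $\geq 2$, so every $\delta_i$ lies in $\{\pm 1\}$ (with $0$ corresponding to an absent term), proving the claim. The only genuinely delicate step is the bijective correspondence of $x$-letters across reductions, which the structural form given by Lemma \ref{lemma:key} makes transparent.
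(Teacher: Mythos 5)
Your argument is correct and follows essentially the same route as the paper: both sides of \textbf{[B]} are put in the form of Lemma \ref{lemma:key}, the separators between two consecutive $x^{\pm\beta}$ coming from a single block $a^{\delta_i}$ with $|\delta_i|\geq 2$ are compared, and the resulting identity $y^{\pm q}=\red(C(y,z)^{\pm q})$ (with $q\neq 0$) contradicts the fact that $C$ contains a $b^{\pm 1}$. Your phrasing of the contradiction via the separator containing a $z^{\pm 1}$ is just a restatement of the paper's step that $C(y,z)\equiv y$ would violate (\ref{eqn:hypo}), so no further comparison is needed.
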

\begin{proof}[Proof of Claim]
Assume that $|\delta|>1$. Then the left side of {\bf [B]} is 
\begin{equation}
\label{eqn:III2-Bl}
\red( D(B(x,y),z)) 
 \equiv \left\{ \begin{array}{ll} 
z^{s}y^{q}x^{\delta}y^{q}x^{\delta}*\cdots & (\delta>1) \\
z^{s}x^{-\delta}y^{-q}x^{-\delta}*\cdots & (\delta<-1). \\
\end{array}  
\right.
\end{equation}

On the other hand by Lemma \ref{lemma:key}, the right side of \textbf{[B]} is \begin{equation}
\label{eqn:III2-Br}
\red( D(B(x,C(y,z)),D(y,z)))
\equiv 
\left\{ \begin{array}{ll} 
\red(z^{s}C(y,z)^{q})x^{\delta}\red( C(y,z)^{q}) x^{\delta}*\cdots & (\delta>1) \\
z^{s} x^{-\delta} \red( C(y,z)^{-q})x^{-\delta}*\cdots & (\delta<-1). \\
\end{array}  
\right.
\end{equation} 

Hence by \textbf{[B]} $y^{q}=C(y,z)^{q}$.
 Since we have assumed $q\neq 0$, this implies $C(y,z) \equiv y$, which contradicts (\ref{eqn:hypo}). This proves $\delta= \pm 1$. 
Similar arguments show $\delta',\delta'',\ldots \in \{\pm 1,0\}$.
\end{proof}

Then we consider the following two cases according to the value of $\delta$.\\

{\it Subcase 2-1: $\delta=+1$}\\

\begin{claim}
\label{claim:formD}
If $\delta=1$, then $D \equiv b^{s}a$ $(s\neq 0)$.
\end{claim}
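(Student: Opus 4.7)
My plan is to apply the key observation (Lemma~\ref{lemma:key}) to equation \textbf{[B]} to peel off the structure of $D$ one block at a time. Writing $D \equiv b^{s_0} a^{\delta_1} b^{s_1} a^{\delta_2} b^{s_2} \cdots a^{\delta_k} b^{s_k}$ in fully reduced form, with $s_1,\ldots,s_{k-1} \neq 0$ and $\delta_1 = \delta = 1$, I substitute $B \equiv b^{q}a^{\beta}$ into \textbf{[B]}. The left side $D(y^{q}x^{\beta},z)$ is already reduced as written since the intermediate $s_i$ are non-zero. On the right side, treating $C(y,z)$ and $D(y,z)$ as alphabet letters $\mathbf{C},\mathbf{D}$ and using Lemma~\ref{lemma:key}, the alphabetized form $\mathbf{D}^{s_0}(\mathbf{C}^{q}x^{\beta})^{\delta_1}\mathbf{D}^{s_1}\cdots\mathbf{D}^{s_k}$ is also reduced, so its reduced form as a word in $x,y,z$ has exactly $k$ occurrences of $x^{\pm\beta}$, matching the count on the left. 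Between consecutive $x$-letters, and before the first / after the last, the words that appear must therefore coincide as elements of $F_{2}=\langle y,z\rangle$.

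Comparing the initial gap (before the first $x$, using $\delta_1 = 1$) yields
\[
z^{s_{0}}y^{q} \;=\; D(y,z)^{s_{0}}\,C(y,z)^{q} \quad \text{in } F_{2}.
\]
If $s_{0} = 0$ then $y^{q}=C^{q}$, and uniqueness of roots in free groups forces $C = y$, contradicting the constraint \eqref{eqn:hypo} that $C$ contains $b^{\pm 1}$. Hence $s_{0} \neq 0$, which already gives the second half of the claim.

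To force $k = 1$, I examine the intermediate gaps. For $(\delta_{i},\delta_{i+1}) = (1,-1)$ the left side contributes $z^{s_{i}}$ between the flanking $x^{\beta}x^{-\beta}$ and the right side contributes $D(y,z)^{s_{i}}$; the equation $z^{s_{i}} = D^{s_{i}}$ forces $s_{i}=0$ (else $D = b$, contradicting that $D$ contains $a^{\pm 1}$), which contradicts $s_{i}\neq 0$ for intermediate $i$. Hence $\delta_{1}=1$ implies $\delta_{2}=1$, and inductively all $\delta_{i}=1$. In the remaining case $(\delta_{i},\delta_{i+1}) = (1,1)$ the gap reads $z^{s_{i}}y^{q} = D^{s_{i}}C^{q}$; combining with the initial-gap equation gives $z^{s_{i}-s_{0}} = D^{s_{i}-s_{0}}$ and therefore $s_{i}=s_{0}$. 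The final-gap equation after the last $x^{\beta}$ is $z^{s_{k}} = D^{s_{k}}$, forcing $s_{k}=0$ by the same $D\neq b$ argument. Consequently $D \equiv (b^{s_{0}}a)^{k}$, which for $k\geq 2$ is a proper non-trivial power and so is not primitive—contradicting the consequence of \textbf{[Aut]} that $D$ must be primitive. Thus $k=1$ and $D \equiv b^{s}a$ with $s = s_{0}\neq 0$.

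The main obstacle is the careful bookkeeping needed to extract the between-$x$ equations from Lemma~\ref{lemma:key} in each subcase of $(\delta_{i},\delta_{i+1}) \in \{\pm 1\}^{2}$, and to verify in each subcase that the alphabetized right side really is reduced so that Lemma~\ref{lemma:key} applies verbatim; once those equations are in place, the conclusions follow mechanically from the primitivity of $D$ and the uniqueness of roots in $F_{2}$.
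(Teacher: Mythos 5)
Your proof is correct and follows essentially the same route as the paper's: apply Lemma~\ref{lemma:key} to \textbf{[B]}, compare the subwords between consecutive $x$-letters to get $z^{s_i}y^{q}=D(y,z)^{s_i}C(y,z)^{q}$ (resp.\ $z^{s_i}=D(y,z)^{s_i}$), rule out $\delta_{i+1}=-1$ and force all $b$-exponents equal via the constraint \eqref{eqn:hypo}, and finally invoke primitivity of $D$ from \textbf{[Aut]} to exclude $D\equiv(b^{s}a)^{k}$ with $k\geq 2$. The only difference is presentational: you set up the full block decomposition and run an explicit induction where the paper treats $\delta'\in\{+1,-1,0\}$ case by case and then iterates.
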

\begin{proof}[Proof of Claim]

If $\delta=+1$, then by (\ref{eqn:III2-Bl}) and (\ref{eqn:III2-Br}) we get 
\begin{equation}
\label{eqn:c-2-1} z^{s}y^{\delta} = D(y,z)^{s}C(y,z)^{\delta}. 
\end{equation}

If $s=0$, then the equation (\ref{eqn:c-2-1}) gives $y^{q}=C(y,z)^{q}$ $(q\neq 0)$, contradictions (\ref{eqn:hypo}), hence $s\neq 0$.

Assume that $\delta'=-1$. This implies, in particular, $s'\neq 0$. Then the right side of \textbf{[B]} is written as
\[ \red( D(B(x,y),z)) \equiv z^{s}y^{q}x^{\beta}z^{s'}x^{-\beta}*\cdots \]
and the left side of equation \textbf{[B]} is
\[ \red( D(B(x,C(y,z)),D(y,z)))
\equiv 
 \red (D(y,z)^{s}C(y,z)^{q}) x^{\beta}\red (D(y,z)^{s'}) x^{-\beta}* \cdots \]
so we get 
\[ D(y,z)^{s'} = z^{s'}. \]
Since $s'\neq 0$, this implies $D(y,z)= z$, which contradicts (\ref{eqn:hypo}).

Similarly, if $\delta'=0$, then the equation \textbf{[B]} is written as
\[ z^{s}y^{q}x^{\beta} z^{s'} =  D(y,z)^{s}C(y,z)^{q}x^{\beta}D(y,z)^{s'} \]
so we get 
\[ D(y,z)^{s'} = z^{s'}, \]
which leads to a contradiction unless $s'=0$, so we have $D \equiv b^{s}a$ as desired.

Finally assume that $\delta'=+1$. Then $s' \neq 0$, and equation \textbf{[B]} is written as
\[ z^{s}y^{q}x^{\beta} z^{s'}y^{q}x^{\beta} *\cdots =D(y,z)^{s}C(y,z)^{q}x^{\beta}D(y,z)^{s'}C(y,z)^{q}x^{\beta} *\cdots. \]
Hence we get 
\[ z^{s'}y^{q} = D(y,z)^{s'}C(y,z)^{q}.\]

By applying this equality to equation (\ref{eqn:c-2-1}), we get \[ D(y,z)^{s-s'}=z^{s-s'}.\]
If $s-s' \neq 0$, then $D(y,z) \equiv z$, which contradicts (\ref{eqn:hypo}) hence $s=s'$.

By iterating the above arguments for $\delta'',\delta''',\ldots$, we conclude that the possible form of the word $D(x,y)$ is $D(x,y)=y^{s}xy^{s}x \cdots y^{s}x=(y^{s}x)^{N}$.
By \textbf{[Aut]} $D(y,z)$ must be a primitive element, so $N=1$. This complete the proof of claim.
\end{proof}

By Claim \ref{claim:formD}, the equation (\ref{eqn:c-2-1}) is now in a simple form,
\begin{equation}
\label{eqn:c-2-1fin} 
z^{s}y^{q} = (z^{s}y)^{s}(y^{r}z^{\gamma}y^{r'})^{q}.
\end{equation}
This equation can be solved directly.

\begin{claim}
\label{claim:solution}
The equation (\ref{eqn:c-2-1fin}) is satisfied only if  $(q,r,\gamma,r',s)=(2,-1,-1,1,2)$.
\end{claim}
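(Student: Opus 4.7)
The plan is to first abelianize equation (\ref{eqn:c-2-1fin}) to reduce to finitely many candidate tuples, and then to plug each back into the free-group equation to eliminate all but one.

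First I would take exponent sums. Reading off the total exponent of $y$ from both sides of (\ref{eqn:c-2-1fin}) gives $q=s+q(r+r')$, and reading off the exponent of $z$ gives $s=s^2+q\gamma$. These rearrange to $s=q(1-r-r')$ and $q\gamma=s(1-s)$. We already know $s\neq 0$ (this was shown earlier in the proof of Claim \ref{claim:formD}) and $q\neq 0$ (Case 2 hypothesis), so in particular $1-r-r'\neq 0$. Eliminating $q$ gives $\gamma=(1-s)(1-r-r')$, and since $\gamma=\pm 1$ we must have $|1-s|=|1-r-r'|=1$. The constraint $s\neq 0$ then forces $s=2$, and from $q\gamma=s(1-s)=-2$ together with $\gamma=-(1-r-r')$ we are left with exactly two possibilities: either $(q,\gamma,r+r')=(2,-1,0)$ or $(q,\gamma,r+r')=(-2,1,2)$.

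Next I would substitute each candidate back into (\ref{eqn:c-2-1fin}) and work in $F_2=\langle y,z\rangle$. In the first case we have $r'=-r$, and using $(y^r z^{-1} y^{-r})^2=y^r z^{-2} y^{-r}$ the right-hand side reduces to $z^2 y z^2 y^{r+1} z^{-2} y^{-r}$. Equating with $z^2 y^2$ and cancelling $z^2 y$ on the left and $y^{-r}$ appropriately yields the equation $z^2 y^{r+1} z^{-2}=y^{r+1}$ in $F_2$. Since $y$ and $z$ generate a free group, this forces $r+1=0$, i.e.\ $(r,r')=(-1,1)$, giving exactly the claimed tuple.

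In the second case, $r'=2-r$, and expanding $(y^r z y^{2-r})^{-2}=y^{r-2}z^{-1}y^{-2}z^{-1}y^{-r}$ the right-hand side of (\ref{eqn:c-2-1fin}) reduces to $z^2 y z^2 y^{r-1} z^{-1} y^{-2} z^{-1} y^{-r}$. Equating with $z^2 y^{-2}$ and cancelling leaves $z^2 y^{r-1} z^{-1} y^{-2} z^{-1} y^{-r}=y^{-3}$. The only cancellation possible inside the left-hand word is between $z^2$ and $z^{-1}$ when $r=1$, which still leaves a reduced word containing $z^{\pm 1}$, whereas the right-hand side is a power of $y$. This is a contradiction in $F_2$, so the second case produces no solutions.

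The main obstacle is the final case-elimination step: one has to be careful that no configuration of $r$ produces enough cancellations inside $z^2 y^{r-1} z^{-1} y^{-2} z^{-1} y^{-r}$ to kill every $z$. This is a short but delicate inspection, since the only place a $z^{\pm 1}$ can cancel is at the junction $z^2 \cdot y^{r-1} \cdot z^{-1}$, and even when $r=1$ a single $z$ survives. All remaining arithmetic is routine.
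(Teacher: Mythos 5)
Your proof is correct, and it takes a genuinely different route from the paper. The paper attacks equation (\ref{eqn:c-2-1fin}) directly as an identity of reduced words: it splits into the cases $r=-r'$ and $r\neq -r'$, and within each it argues about where cancellations can occur in $(z^{s}y)^{s}(y^{r}z^{\gamma}y^{r'})^{q}$, running through subcases on the signs and sizes of $s$, $q$, $r$ (e.g.\ forcing $r=-1$ because otherwise the right side is already reduced, then checking $s=1$, $s=2$, $s>2$ separately, and handling $r\neq -r'$ by a parallel analysis for each sign pattern of $s$ and $q$). You instead abelianize first, which pins down $s=2$ and leaves only the two candidates $(q,\gamma,r+r')=(2,-1,0)$ and $(-2,1,2)$; the free-group verification then reduces to the single clean identity $z^{2}y^{r+1}z^{-2}=y^{r+1}$ in the first case (forcing $r=-1$ since the centralizer of $z^{2}$ in $F_{2}$ is $\langle z\rangle$) and to a short reduced-word inspection in the second. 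Your approach buys a much smaller case analysis and makes the exponent constraints transparent, at the cost of the one delicate step you correctly flag: checking that no value of $r$ produces enough cancellation in $z^{2}y^{r-1}z^{-1}y^{-2}z^{-1}y^{-r}$ to eliminate every $z^{\pm1}$, which you handle correctly (only $r=1$ allows any cancellation, and a $z$ survives). The hypotheses you import ($q\neq 0$ from the Case 2 assumption, $s\neq 0$ from the proof of Claim \ref{claim:formD}, $\gamma=\pm1$ from Lemma \ref{lem:restriction}) are all available at this point, so the argument is complete.
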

\begin{proof}[Proof of Claim]
We have seen that $C \equiv a^{r}b^{\gamma}a^{r'}$ and $D \equiv b^{s}a$.

First we consider the case $r=-r'$. Then the equation (\ref{eqn:c-2-1fin}) is written as  
\begin{equation}
\label{eqn:claim1} z^{s}y^{q} = (z^{s}y)^{s}y^{r}z^{\gamma q}y^{-r}
\end{equation}
If $r=0$, then $z^{s}y^{q} = (z^{s}y)^{s} z^{\gamma q}$. However, since $\gamma ,q \neq 0$ this is impossible so $r\neq 0$. Then we conclude $s>0$ and $r=-1$ because otherwise the right side of (\ref{eqn:claim1}) is already reduced so (\ref{eqn:claim1}) cannot be satisfied.
If $s=1$, then we get $z^{s}y^{q}=z^{s+ \gamma q}y^{-r}$. However, $\gamma,q \neq 0$ this is impossible.
If $s=2$, we get a solution $(q,r,\gamma,r',s)=(2,-1,-1,1,2)$.
Finally, if $s>2$ then $z^{s}y^{q}=(z^{s}y)^{s-2}z^{s}yz^{s+\gamma q}y$, it is impossible.

To complete the proof of claim, we show that there are no $(q,r,\gamma,r',s)$ satisfying (\ref{eqn:c-2-1fin}) if $r\neq -r'$. We prove this for the case $s>0,q>0$. The other cases are treated in a similar way.

In the case $s>0,q>0$ the equation (\ref{eqn:c-2-1fin}) is written as
\begin{equation}
\label{eqn:c-2-1claim} z^{s}y^{q} = (z^{s}y)^{s-1}z^{s}y^{r+1}z^{\gamma}y^{r'}(y^{r}z^{\gamma}y^{r'})^{q-1}.
\end{equation}
Since $r \neq -r'$, to satisfy (\ref{eqn:c-2-1claim}), we need $r =-1$ because otherwise the right side of (\ref{eqn:c-2-1claim}) is reduced. Hence we have
\[ z^{s}y^{q} = (z^{s}y)^{s-1}z^{s+\gamma}y^{r'}(y^{-1}z^{\gamma}y^{r'})^{q-1}\]

By \textbf{[Aut]}, $s$ and $\gamma$ should be coprime so $s + \gamma \neq 0$ if $|s| >1$. Hence if $s>1$, then the right side cannot be reduced further, so this equation cannot be satisfied.
Thus $s=1$, and we finally get
\[ zy^{q} = z^{1+\gamma}y^{r'}(y^{-1}z^{\gamma}y^{r'})^{q-1}. \]
If $q>1$, then the right side cannot be reduced further, and if $q=1$ we get $r'=+1$, which contradicts our assumption that $r \neq -r'$ because we have seen $r=-1$.
\end{proof}

By Claim \ref{claim:solution}, $B \equiv b^{2}a^{\beta}$, $C \equiv a^{-1}ba$ and $D\equiv b^{2}a$ ($\beta \in \{\pm 1 \}$). By \textbf{[M]}, $A^{\beta} = x^{\beta} y^{-1}x^{\beta}$, so we conclude
\begin{enumerate}
\item[$(D-1)$] $(A,B,C,D) = ( a^{-1}b^{-1}a, b^{2}a, a^{-1}b^{-1}a, b^{2}a)$.
\item[($D-2$)] $(A,B,C,D) = (aba^{-1},b^{2}a^{-1}, a^{-1}b^{-1}a, b^{2}a)$.\\ 
\end{enumerate}

{\it Subcase 2-2: $\delta=-1$.}\\

 In this case the equation \textbf{[B]} is given by
\[ z^{s}x^{-\beta}y^{-q} z^{s'}(y^{q}x^{\beta})^{\delta'} \cdots =D(y,z)^{s}x^{-\beta}C(y,z)^{-q}D(y,z)^{s'}(C(y,z)^{q}x^{\beta})^{\delta'} \cdots \]
 hence we get 
\[ z^{s} = D(y,z)^{s}\]
If $s \neq 0$, this shows that $D(y,z) \equiv z$, which contradicts (\ref{eqn:hypo}). Hence $s=0$ and $D(x,y)$ is 
\[ D(x,y) \equiv x^{-1}y^{s'}x^{\delta'}y^{s''} \cdots. \]
We consider the following three cases according to the value $\delta'$.\\

{\it Subcase 2-2-1: $\delta'=+1$.}\\

In this case $s' \neq 0$ and the equation \textbf{[B]} is 
\[ x^{-\beta}y^{-q} z^{s'} y^{q}x^{\beta} z^{s''} (y^{q}x^{\beta})^{\delta''} \cdots =x^{-\beta}C(y,z)^{-q}D(y,z)^{s'}C(y,z)^{q}x^{\beta}D(y,z)^{s''} (C(y,z)^{q}x^{\beta})^{\delta''}\cdots \]
so we have 
\begin{equation}
\label{eqn:c-2-2-1} y^{-q}z^{s'}y^{q} = C(y,z)^{-q}D(y,z)^{s'}C(y,z)^{q}. 
\end{equation}
Moreover, if $s'' \neq 0$, then we further get 
\begin{gather}
\label{eqn:c-2-2-11}
\begin{cases}
z^{s''} = D(y,z)^{s''} & (\delta'' = -1,0)\\
z^{s''}y^{q} = D(y,z)^{s''}C(y,z)^{q} & (\delta'' = +1)\\
\end{cases}
\end{gather}
However, (\ref{eqn:c-2-2-1}) and (\ref{eqn:c-2-2-11}) show $D(y,z)=z$, which contradicts (\ref{eqn:hypo}). Hence $s''=\delta''=\cdots = 0$ and $D\equiv a^{-1}b^{s'}$.
Then (\ref{eqn:c-2-2-1}) is written as 
\[
y^{-q}z^{s'}y^{q} =
(y^{r}z^{\gamma}y^{r'})^{-q} (y^{-1}z^{s'})^{s'} (y^{r}z^{\gamma}y^{r'})^{q}
\]
By comparing the exponent of $y$, we conclude $s'=0$. However, since we have assumed $s' \neq 0$, this is a contradiction. \\

{\it Subcase 2-2-2: $\delta'=-1$. }\\

In this case $s' \neq 0$ and the equation \textbf{[B]} is 
\[ x^{-\beta}y^{-q}z^{s'}x^{-\beta}y^{-q}z^{s''}(y^{q}x^{\beta})^{\delta''} \cdots =x^{-\beta}C(y,z)^{-q}D(y,z)^{s'}x^{-\beta}C(y,z)^{-q}D(y,z)^{s''} \cdots,\]
so we get
\begin{equation}
\label{eqn:c-2-2-2}
y^{-q}z^{s'} = C(y,z)^{-q}D(y,z)^{s'}. 
\end{equation}
Then, by using a similar argument as in SubCase 2-1 (Claim \ref{claim:formD}), we conclude that $\delta''=0$ and $s'=s''$. Thus, $D \equiv a^{-1}b^{s'}a^{-1}b^{s'}$. However, by {\bf [Aut]} $D$ is primitive, contradiction.\\
 
{\it Subcase 2-2-3: $\delta'=0$.}\\

By \textbf{[B]}, we get equation
\begin{equation}
\label{eqn:solution'} y^{-q}z^{s'} = (y^{r}z^{\gamma}y^{r'})^{-q}(y^{-1}z^{s'})^{s'}. 
\end{equation}

By a similar argument as Claim \ref{claim:solution}, (\ref{eqn:solution'}) is satisfied only if $(q,r,\gamma,r',s') = (2,-1,1,1,2)$ so $C\equiv a^{-1}ba$ and $D\equiv a^{-1}b^{2}$. By \textbf{[M]}, $A(x,y)^{\beta}=x^{-\beta}y^{-1}x^{\beta}$, hence we conclude
\begin{enumerate}
\item[$(D-3)$] $(A,B,C,D) = (a^{-1}ba, b^{2}a, a^{-1}ba, a^{-1}b^{2})$.
\item[$(D-4)$] $(A,B,C,D) = (ab^{-1}a^{-1}, b^{2}a^{-1}, a^{-1}ba, a^{-1}b^{2})$.
\end{enumerate}
\end{proof}

\section{Group-valued invariants of knots and links}
\label{sec:invariants}

We close the paper by giving a short discussion on group-valued invariants of knots and links from local $\Aut(F_{n})$ representations.

\begin{definition}
We say a local $\Aut(F_{n})$ braid representation $\Theta$ defined by $(\tau_{1},\ldots, \tau_{n-1}) \in \Aut(F_{2})^{n-1}$ has the {\em stabilization properties} if the following two conditions are satisfied:
\begin{description}
\item[S1] The group $G_{i} = \langle a,b \: | \: (b)\tau_{i} = b \rangle$ is an infinite cyclic group generated by $a=b$.
\item[S2] For any $m>n$, there exist a local $\Aut(F_{m})$ representation $\Theta^{*}$ which extends $\Theta$: That is, there is a local $\Aut(F_{m})$ representation $\Theta^{*}$ defined by $(\tau_{1},\ldots, \tau_{n-1}, \tau_{n},\ldots, \tau_{m-1})$.
\end{description} 
\end{definition}

Let $L$ be an oriented link in $S^{3}$ represented as a closure of an $n$-braid $\beta$, and let $\Theta$ be a local $\Aut(F_{n})$ representation having the stabilization properties. Let $G_{\Theta}(\beta)$ be a group given by the presentation
\[ G_{\Theta}(\beta) = \left\langle x_{1},\ldots,x_{n} \: | \: (x_{i})[\Theta(\beta)] = x_{i} \;\;\; (i=1,\ldots, n) \right\rangle. \]

\begin{proposition}[Group-valued invariant of links]
\label{prop:ginvariant}
The group $G_{\Theta}(\beta)$ does not depend on a choice of closed braid representatives of $L$, hence $G_{\Theta}(\beta)$ defines a group-valued invariant of $L$.
\end{proposition}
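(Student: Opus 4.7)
The plan is to invoke Markov's theorem, which reduces the statement to checking that $G_\Theta(\beta)$ is preserved under the two Markov moves: (i) conjugation $\beta \mapsto \gamma\beta\gamma^{-1}$ inside $B_n$, and (ii) positive and negative stabilization $\beta \leftrightarrow \beta\sigma_n^{\pm 1}$ between $B_n$ and $B_{n+1}$. The second move requires a local $\Aut(F_{n+1})$ extension of $\Theta$, which is exactly what property \textbf{S2} provides.

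For conjugation invariance, set $y_i := (x_i)\Theta(\gamma)$; since $\Theta(\gamma) \in \Aut(F_n)$, the $y_i$ form another free basis of $F_n$. Post-composing the defining relation $(x_i)\Theta(\gamma\beta\gamma^{-1}) = x_i$ of $G_\Theta(\gamma\beta\gamma^{-1})$ with $\Theta(\gamma)$ yields $(y_i)\Theta(\beta) = y_i$, which is precisely the defining relation of $G_\Theta(\beta)$ written in the basis $\{y_i\}$. This immediately produces an isomorphism $G_\Theta(\gamma\beta\gamma^{-1}) \cong G_\Theta(\beta)$.

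For stabilization invariance, first observe that since $\beta$ is a word in $\sigma_1, \ldots, \sigma_{n-1}$ and each $\Theta^*(\sigma_j)$ with $j \leq n-1$ is $j$-local, the automorphism $\Theta^*(\beta)$ fixes $x_{n+1}$. Hence in $G_{\Theta^*}(\beta\sigma_n)$ the relation for $i = n+1$ simplifies by $n$-locality of $\Theta^*(\sigma_n)$ to $\tau_n(b)[a \mapsto x_n,\ b \mapsto x_{n+1}] = x_{n+1}$. Substituting $a = x_n$, $b = x_{n+1}$ into the statement of \textbf{S1} shows that this single relation forces $x_n = x_{n+1}$ in $G_{\Theta^*}(\beta\sigma_n)$, collapses the subgroup $\langle x_n, x_{n+1}\rangle$ to an infinite cyclic group, and makes $\Theta^*(\sigma_n)$ act as the identity on this quotient. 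Combined with the triviality of $\Theta^*(\sigma_n)$ on $x_1, \ldots, x_{n-1}$, the remaining relations for $i \leq n$ reduce to $(x_i)\Theta(\beta) = x_i$, and under the natural isomorphism $F_{n+1}/\langle\langle x_n x_{n+1}^{-1}\rangle\rangle \cong F_n$ we recover exactly the presentation of $G_\Theta(\beta)$, establishing $G_{\Theta^*}(\beta\sigma_n) \cong G_\Theta(\beta)$. The negative stabilization $\beta\sigma_n^{-1}$ is handled identically since $\tau_n(b)\cdot b^{-1}$ and $\tau_n^{-1}(b)\cdot b^{-1}$ normally generate the same subgroup of $F_2$, so \textbf{S1} applies equally to $\tau_n^{-1}$.

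The main obstacle is not the identification $x_n = x_{n+1}$, which falls straight out of \textbf{S1}, but the subtler claim that $\Theta^*(\sigma_n)$ acts trivially on the cyclic quotient after this identification. This is the substantive content of \textbf{S1}: demanding that $G_i$ be infinite cyclic \emph{generated by $a = b$}, rather than merely cyclic, encodes that $\tau_i$ descends to the identity on $G_i$. On the level of abelianizations this amounts to the condition that the exponent-sum $p + r$ of $\tau_n(a)$ (with $a = b = 1$) equals $1$, which combined with the constraint $\det \tau_n^{\mathrm{ab}} = \pm 1$ and the \textbf{S1}-constraint $q + s = 1$ can be verified directly for each vertex of the graph $\Gamma$ in Figure \ref{fig:classification} that admits \textbf{S1}.
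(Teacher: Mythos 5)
Your argument follows the paper's proof almost step for step: Markov's theorem, conjugation invariance by a change of free basis (the paper dispatches this with ``by definition''), and stabilization invariance via an extension $\Theta^{*}$ supplied by \textbf{S2} together with the collapse of $\langle x_{n},x_{n+1}\rangle$ forced by \textbf{S1}. You have also correctly isolated the load-bearing step that the paper passes over in one line: it is not enough that the relation $(x_{n+1})\tau_{n}^{\pm 1}=x_{n+1}$ forces $x_{n}=x_{n+1}$ and an infinite cyclic quotient; one also needs $\tau_{n}$ to induce the \emph{identity} on that quotient, so that the $i=n$ relation $(x_{n})[\Theta(\beta)\tau_{n}^{\pm 1}]=x_{n}$ reduces to $(x_{n})\Theta(\beta)=x_{n}$.

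The gap is in your justification of that step. The claim that \textbf{S1} ``encodes that $\tau_{i}$ descends to the identity on $G_{i}$'' is not correct as stated: \textbf{S1} only guarantees that the image of $(a)\tau_{n}$ in $G_{i}\cong\mathbb{Z}$ is $a^{k}$ with $k$ the total exponent sum of the word $(a)\tau_{n}$, and nothing in ``infinite cyclic generated by $a=b$'' forces $k=1$. Concretely, the core $\tau(a)=b^{-1}$, $\tau(b)=a$ of the type $(B)$ representations satisfies \textbf{S1} verbatim ($\langle a,b\mid a=b\rangle\cong\mathbb{Z}$) yet induces $t\mapsto t^{-1}$ on the quotient, and there the reduction genuinely fails: the trivial $1$-braid gives $\mathbb{Z}$ while its stabilization $\sigma_{1}$ gives $\langle x_{1},x_{2}\mid x_{2}^{-1}=x_{1},\ x_{1}=x_{2}\rangle\cong\mathbb{Z}/2$. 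So the exponent-sum check you sketch at the end is not a confirmation of \textbf{S1} but an additional hypothesis, and it does not hold at every vertex of $\Gamma$ where \textbf{S1} holds. (The paper's own proof makes the identical leap --- ``By \textbf{S1}, the relation $(x_{n+1})\tau_{n}^{\pm 1}=x_{n+1}$ implies $(x_{n})\tau_{n}^{\pm 1}=x_{n}$'' --- so your write-up is no less rigorous than the original; but having explicitly identified this as the substantive step, you need either to strengthen \textbf{S1} to require $(a)\tau_{i}=a$ in $G_{i}$, or to restrict to the types where the exponent sum is $+1$.) A smaller inaccuracy: for negative stabilization the normal closures of $(b)\tau_{n}\cdot b^{-1}$ and $(b)\tau_{n}^{-1}\cdot b^{-1}$ need not coincide as subgroups of $F_{2}$; what is true is that $\tau_{n}$ carries one onto the other and hence induces an isomorphism of the two quotients, under which the identification $a=b$ in the second quotient again requires the same exponent-sum condition.
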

\begin{proof}
By Markov theorem (see \cite{b}, for example) the closures of two braids $\alpha$ and $\beta$ represent the same oriented link if and only if $\alpha$ is converted to $\beta$ by applying following two operations.
\begin{description}
\item[Conjugation] $\alpha \rightarrow \gamma^{-1}\alpha\gamma$ where $\alpha,\gamma \in B_{n}$.
\item[(De)Stabilization] $\alpha \leftrightarrow \alpha\sigma_{n}^{\pm 1}$ where $\alpha \in B_{n}$.
\end{description}

Assume that $\Theta$ be a local $\Aut(F_{n})$ representation defined by $(\tau_{1},\ldots,\tau_{n-1}) \in \Aut(F_{2})^{n-1}$.
By definition, the group $G_{\Theta}(\beta)$ does not change under the conjugacy, so it is sufficient to show the invariance under the stabilization.

Take an arbitrary local $\Aut(F_{n+1})$ representation $\Theta^{*}$ which extends $\Theta$. That is, we assume that $\Theta^{*}$ is defined by $(\tau_{1},\ldots,\tau_{n-1},\tau_{n}) \in \Aut(F_{2})^{n}$ for some $\tau_{n} \in \Aut(F_{2})$. Such $\Theta^{*}$ exists from the assumption {\bf S2}. Then 
\[ [\Theta^{*}(\beta \sigma_{n}^{\pm 1})] (x_{i}) =
\left\{
\begin{array}{ll}
(x_{i})[\Theta(\beta)] & i \neq n,n+1, \\
(x_{n})[\Theta(\beta) \tau_{n}^{\pm 1}] & i=n, \\
(x_{n+1})\tau_{n}^{\pm 1} & i=n+1.
\end{array}
\right.
\]
By {\bf S1}, the relation $(x_{n+1})\tau_{n}^{\pm 1} = x_{n+1}$ implies $(x_{n})\tau_{n}^{\pm 1} = x_{n} = x_{n+1}$, hence two groups $G_{\Theta}(\beta)$ and $G_{\Theta^{*}}(\beta\sigma_{n}^{\pm 1})$ are isomorphic.
\end{proof}

A priori, it is not clear whether there is a local $\Aut(F_{n})$-representation having stabilization properties. However, the classification theorem shows almost all local $\Aut(F_{n})$-representations satisfy the stabilization properties.
\begin{corollary}
If $\Theta$ is a local $\Aut(F_{n})$-representation which is not of type $(T)$ or $(T')$, then $\Theta$ satisfies the stabilization properties.
\end{corollary}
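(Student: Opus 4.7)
The plan is a direct inspection of Theorem~\ref{thm:main} and the graph $\Gamma$ of Figure~\ref{fig:classification}. Under the identification in the paragraph after Theorem~\ref{thm:main}, a local $\Aut(F_n)$-representation $\Theta$ corresponds to an oriented edge-path $v_1 \to v_2 \to \cdots \to v_{n-1}$ in $\Gamma$, with $\tau^{\Theta}_i = \tau_{v_i}$. The hypothesis rules out the components of $\Gamma$ labelled $(T)$ and $(T')$, so this path lies entirely inside one of the four components $(A_r)$ (some $r \geq 0$), $(B)$, $(C)$, or $(D)$.

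I first verify (S2). Inspecting Figure~\ref{fig:classification} one sees that inside each of the four permitted components every vertex is the source of at least one outgoing edge: the vertices carrying self-loops $(A_r-1), \overleftarrow{(A_r-1)}, (B-1), (B-1)^{\sigma}, (C-1), (C-3), (D-1), (D-1)^{-\sigma}$ are immediately fine, and the remaining vertices in $(A_r)$ and $(D)$ carry non-loop outgoing arrows as well (in $(A_r)$ the arrows $(A_r-2)^{-\sigma}$, $(A_r-3)$, $\overleftarrow{(A_r-3)}$, $\overleftarrow{(A_r-2)}^{-\sigma}$, and analogously in $(D)$ the arrows labelled by dualities of $(D-2), (D-3), (D-4)$). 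Thus, starting from $v_{n-1}$, I can pick any outgoing edge to define $v_n$, then any outgoing edge from $v_n$ to define $v_{n+1}$, and so on. The resulting tuple $(\tau_{v_1}, \ldots, \tau_{v_{m-1}}) \in \Aut(F_2)^{m-1}$ defines a local $\Aut(F_m)$-representation $\Theta^*$ extending $\Theta$, for every $m > n$.

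I next verify (S1) by showing $G_v = \langle a, b \mid \tau_v(b) = b\rangle$ is infinite cyclic at every vertex $v$ of the four components. Since the natural symmetries $\cdot^{-}, \cdot^{\sigma}, \overleftarrow{\cdot}$ carry the relator $\tau_v(b)\cdot b^{-1}$ to a defining relator of an isomorphic group, it suffices to read off $\tau_v(b)$ at the representatives appearing in Theorem~\ref{thm:main}. In $(A_r), (B), (C)$ every representative has $\tau_v(b) \in \{a, a^{-1}\}$, so the relation identifies $b$ with $a^{\pm 1}$ and $G_v = \langle a\rangle \cong \Z$. In $(D)$ the four representatives have $\tau_v(b) \in \{b^{2}a,\; b^{2}a^{-1},\; a^{-1}b^{2},\; ab^{2}\}$; in each of the four cases a single $b^{\pm 1}$ cancels in the relation $\tau_v(b) = b$ to yield $a = b^{\pm 1}$, so again $G_v \cong \Z$.

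The only real obstacle is the bookkeeping of natural symmetries: one must trace how $\cdot^{-}, \cdot^{\sigma}, \overleftarrow{\cdot}$ act on the representatives of Theorem~\ref{thm:main} in order to confirm that every vertex of $\Gamma$ in the four permitted components is covered by one of the cases above. This is routine, since the symmetries act very explicitly on reduced words; once the (short) table of possible $\tau_v(b)$'s per component is written out, both (S1) and (S2) reduce to mechanical inspection and the corollary follows.
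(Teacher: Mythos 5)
The paper states this corollary without proof, so there is no argument of the author's to compare against; your strategy of verifying (S1) and (S2) by direct inspection of Theorem \ref{thm:main} and the graph $\Gamma$ of Figure \ref{fig:classification} is clearly the intended one, your check of (S2) (every vertex of the components $(A_r)$, $(B)$, $(C)$, $(D)$ carries an outgoing edge) is correct, and the final conclusion is right.

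The one step that does not hold up as written is the blanket claim that the natural symmetries ``carry the relator $\tau_v(b)\cdot b^{-1}$ to a defining relator of an isomorphic group.'' This is true for the inverse symmetry (apply $\tau$ to the relator: $\langle a,b \mid \tau^{-1}(b)b^{-1}\rangle \cong \langle a,b \mid \tau(b)b^{-1}\rangle$) and for the backward symmetry (since $\overleftarrow{W}=\phi(W^{-1})$ for the automorphism $\phi(a)=a^{-1}$, $\phi(b)=b^{-1}$), but it fails for the swap symmetry: the cores of $\Theta^{\sigma}$ are $a\mapsto D^{\sigma}, b\mapsto C^{\sigma}$ and $a\mapsto B^{\sigma}, b\mapsto A^{\sigma}$, so the groups relevant to (S1) become $\langle a,b\mid C^{\sigma}(a,b)=b\rangle\cong\langle a,b\mid C(a,b)=a\rangle$ and $\langle a,b\mid A(a,b)=a\rangle$. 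That is, the swap replaces the condition ``$\langle a,b\mid B=b\rangle\cong\Z$'' by the genuinely different condition ``$\langle a,b\mid A=a\rangle\cong\Z$'' (and $D=b$ by $C=a$); it also reverses the orientations of edges of $\Gamma$, so for (S2) one must also check that every vertex has an \emph{incoming} edge. Both supplementary checks do succeed: in every entry of Theorem \ref{thm:main} outside $(T)$, $(T')$ the words $A$ and $C$ have the form $a^{k}b^{\pm1}a^{-k}$, $ab^{\pm1}a$, or $b^{\pm1}$, so the relation $\tau_v(a)=a$ again collapses $F_2$ to $\langle a\rangle\cong\Z$ with $b=a^{\pm1}$, and every vertex of the four components has an incoming edge. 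So the corollary is safe, but your symmetry reduction as stated skips exactly the cases in which the condition changes form; you should add the short table for $\tau_v(a)$ alongside the one for $\tau_v(b)$. A smaller point worth making explicit: in the cases where the relation forces $a=b^{-1}$ rather than $a=b$ (e.g.\ $(A_r$--$3)$, $(C$--$3)$, $(D$--$1)$), condition (S1) as literally worded (``generated by $a=b$'') is not met; one must read it as ``infinite cyclic with $b=a^{\pm1}$,'' under which reading the proof of Proposition \ref{prop:ginvariant} still goes through with $x_{n+1}=x_n^{-1}$.
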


To take into account of a choice of local $\Aut(F_{n})$ representations and closed braid representative, it is convenient to formulate the group-valued invariants as follows.

\begin{definition}[Group-valued invariants]
Let $L$ be an oriented link in $S^{3}$ and $\wTheta$ be a non-trivial local $\Aut(F_{\infty})$-representation defined by a sequence $(\tau_{1},\tau_{2},\ldots,)$. The \emph{group-valued invariant} $G_{\wTheta}(L)$ is a group $G_{\wTheta}(L) = G_{\Theta_{n}}(\beta)$
where $\beta$ is an $n$-braid whose closure is $L$ and $\Theta_{n}$ is a local $\Aut(F_{n})$-representation defined by $(\tau_{1},\ldots,\tau_{n})$. 
\end{definition}

As we have mentioned in introduction, at first glance we have obtained many new group-valued invariants. However, classification theorem also tells us that group-valued invariants for certain local $\Aut(F_{n})$-representations are nothing new.

\begin{theorem}
\label{theorem:ginvariant}
Two local $\Aut(F_{\infty})$ representations defines the same invariant if they are of the same type. In particular, a group-valued invariant from local $\Aut(F_{\infty})$ representations coincides with the group-valued invariant from the corresponding Wada-type representation. 
\end{theorem}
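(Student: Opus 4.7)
The plan is to verify the theorem type by type, using the classification in Theorem~\ref{thm:main} and the structure of the graph $\Gamma$. For each type I would exhibit an isomorphism $G_{\wTheta}(L) \cong G_{\wTheta'}(L)$ for any two $\wTheta, \wTheta'$ of that type, realized as a Tietze transformation of the presentations. The coincidence with the Wada-type invariant then follows because every type containing a self-loop in $\Gamma$ admits a Wada-type representative as a constant path.

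Types $(T)$ and $(T')$ are handled by direct computation: here the action is either trivial or a simple sign involution on each strand, so the presentation of $G_{\wTheta}(L)$ is easily seen to depend only on the type and not on the specific $\wTheta$.

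For the remaining types $(A_r)$, $(B)$, $(C)$, $(D)$, fix a Wada-type representative $\wTheta_W$ corresponding to a self-loop vertex $v^W$ in the component. For an arbitrary $\wTheta$ of the same type, defined by a path $(v_1, v_2, \ldots)$ in that component, I would construct a substitution $x_j \mapsto y_j$ so that each relation $(x_j)\wTheta(\beta) = x_j$ transforms into $(y_j)\wTheta_W(\beta) = y_j$. The choice of $y_j$ is dictated by the natural symmetry (inverse, swap, backward, or composition thereof) relating the vertex $v_j$ to $v^W$, as encoded by the edge labels in $\Gamma$; typically $y_j \in \{x_j, x_j^{-1}\}$ when only inverse or swap dualities are involved, with more elaborate conjugation substitutions when the backward duality enters.

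The main obstacle, and the heart of the argument, is the compatibility of these substitutions across consecutive positions: since $\Theta(\sigma_j)$ couples $x_j$ and $x_{j+1}$, the substitutions $y_j$ and $y_{j+1}$ are linked through each edge of $\Gamma$ along the path. I would verify, edge by edge, that the prescribed substitution converts the $\wTheta(\sigma_j)$-relation into the $\wTheta_W(\sigma_j)$-relation, and that the $y_j$ assigned by the edge from $v_{j-1}$ to $v_j$ agrees with the one assigned by the edge from $v_j$ to $v_{j+1}$. Since $\Gamma$ has only finitely many edge types up to the three commuting natural symmetries, this check reduces to a finite case analysis, in which one representative edge in each symmetry class is computed and the others follow by transporting along the symmetry. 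Once this gluing is accomplished, the global substitution provides the desired isomorphism $G_{\wTheta}(L) \cong G_{\wTheta_W}(L)$.
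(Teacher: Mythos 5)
Your proposal takes a genuinely different route from the paper, and its central step has a gap that I do not believe can be closed in the form you describe. You fix the braid $\beta$ and the strand number $n$ and try to carry the presentation of $G_{\wTheta}(\beta)$ to that of $G_{\wTheta_W}(\beta)$ by a substitution $x_j\mapsto y_j$ dictated by the natural symmetry relating the vertex $v_j$ to the Wada vertex. Two problems arise. First, two edge-paths of the same type need not be related by any single natural symmetry: a path may sit at different vertices of the component at different positions $j$, so there is no global symmetry of the pair $(B_n,\Aut(F_n))$ transporting $\wTheta$ to $\wTheta_W$, and the local substitutions you would read off from the edges have no a priori reason to glue. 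Second, and more fundamentally, two of the three natural symmetries do not act by substituting generators for a \emph{fixed} braid word: the swap duality reverses the order of the strands (it sends $\sigma_i$ to $\sigma_{n-i}$ as well as $x_j$ to $x_{n+1-j}$), and the inverse duality satisfies $\Theta^{-}(\beta)=\Theta(\bar\beta)$ where $\bar\beta$ is the braid with every letter inverted, whose closure is the \emph{mirror} of $L$. So when vertices of a component are related by these dualities (as happens in types $(B)$, $(C)$, $(D)$ and $(A_r)$), the identification of the two invariants is really a conjugation/mirror symmetry of the link invariant, not an in-place Tietze move on the presentation for the given $\beta$. Your "edge-by-edge finite case analysis" is therefore not a routine verification; it is asking for a conjugacy $\Theta(\beta)\sim\Theta'(\beta)$ in $\Aut(F_n)$ for all $\beta$, which is stronger than the theorem and is not established (nor needed).

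The paper's proof avoids all of this with a short Markov-move argument whose key idea is absent from your proposal: since $\wTheta$ and $\wTheta'$ have defining sequences lying in the same connected component of $\Gamma$, one can concatenate $(\tau_1,\dots,\tau_{n-1})$, a connecting edge-path, and $(\tau'_1,\dots,\tau'_{n-1})$ into a single local $\Aut(F_N)$-representation $\Phi$ for large $N$. The link $L$ is the closure both of $\beta\sigma_n\cdots\sigma_{N-1}$, where $\Phi$ sees only the first block and the extra letters are stabilizations, and of $\sigma_1\cdots\sigma_{N-n}\textsf{sh}(\beta)$, where $\Phi$ sees only the last block; Proposition \ref{prop:ginvariant} (invariance under conjugation and stabilization) then yields $G_{\wTheta}(L)\cong G_{\Phi}(\beta\sigma_n\cdots\sigma_{N-1})\cong G_{\Phi}(\sigma_1\cdots\sigma_{N-n}\textsf{sh}(\beta))\cong G_{\wTheta'}(L)$. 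In other words, one should change the number of strands and slide the braid from one end of the concatenated representation to the other, rather than try to identify the two presentations at fixed $n$. If you want to salvage your approach, you would at minimum have to incorporate conjugation and stabilization of the braid into your "substitutions", at which point you have essentially reconstructed the paper's argument.
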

\begin{proof}
Assume that $\wTheta$ and $\wTheta'$ are of the same type defined by $(\tau_{1},\ldots)$ and $(\tau'_1,\ldots)$, respectively.
Represent $L$ a closure of $n$-braid $\beta$, and let $\Theta_{n}, \Theta'_{n}$ be the  local $\Aut(F_{n})$-representations defined by $(\tau_{1},\ldots,\tau_{n-1})$ and  $(\tau'_{1},\ldots,\tau'_{n-1})$, respectively.
From the Classification theorem of local $\Aut(F_{n})$-representation, by taking sufficiently large $N$, there is a local $\Aut(F_{N})$-representations $\Phi$ defined by a sequence
\[ (\underbrace{\tau_1,\ldots,\tau_{n-1}}_{n-1},\ldots,\underbrace{\tau'_{1},\ldots,\tau'_{n-1}}_{n-1}) \in \Aut(F_{2})^{N}. \]
Let $\textsf{sh}:B_{n}\rightarrow B_{N}$ be the homomorphism defined by $\textsf{sh}(\sigma_{i}) = \sigma_{i+(N-n)}$.
The closures of $N$-braids $(\beta \sigma_{n}\sigma_{n+1}\cdots\sigma_{N-1})$ and $\sigma_{1}\cdots \sigma_{N-n}\textsf{sh}(\beta)$ are $L$, hence 
\[
G_{\wTheta}(L) \cong G_{\Phi}(\beta \sigma_{n}\sigma_{n+1}\cdots\sigma_{N-1}) \cong G_{\Phi}(\sigma_{1}\cdots \sigma_{N-n}\textsf{sh}(\beta)) \cong G_{\wTheta'}(L).
\]
\end{proof}

\end{document}